\newtheorem{defin}{Definition}[section]
\newenvironment{definition}{\begin{defin}\rm}{\end{defin}}
\newtheorem{theorem}[defin]{Theorem}
\newtheorem{exa}[defin]{Example}
\newenvironment{example}{\begin{exa}\rm}{\end{exa}}\newtheorem{lemma}[defin]{Lemma}
\newtheorem{corollary}[defin]{Corollary}
\newtheorem{question}{Question}
\newenvironment{remark}
{\par\vspace{2.5mm}\noindent{\bf Remark.}}{\par\vspace{2.5mm}}
\newcommand{\N}{\mathbb{N}}
\newcommand{\D}{\mathcal{D}}
\numberwithin{equation}{section}
\begin{document}

\title[Asymptotic $\psi$-density of subsets of positive integers]{Asymptotic $\psi$-density of subsets of\\ positive integers}

\author{J.~Heittokangas and Z.~Latreuch}

\begin{abstract}
The sizes of subsets of the natural numbers are typically quantified in terms of asymptotic (linear) and logarithmic densities. These concepts have been generalized to weighted $w$-densities, where a specific weight function $w$ plays a key role. In this paper, a parallel theory of asymptotic $\psi$-densities is introduced, where the weight is expressed slightly differently in terms of   differentiable functions $\psi$, which are either concave or convex and satisfy certain asymptotic properties. Alternative new proofs for known results on analytic and Abel densities are also given.\\[5pt]
\textbf{Keywords:} Abel density, analytic density, asymptotic density, asymptotic $\psi$-density, logarithmic density, regular density, weighted density.\\[5pt]
\textbf{2020 MSC:} Primary 11B05, Secondary 26A51.
\end{abstract}

\maketitle

\renewcommand{\thefootnote}{}
\footnotetext[1]{Corresponding author: J.~Heittokangas.}
\footnotetext[2]{Z.~Latreuch was supported by the University of Mostaganem and the National Higher School of Mathematics. He would like to thank the Department of Physics and Mathematics at the UEF for its hospitality during his visit in March 2023.}

%%%%%%%%%%%%%%%%%%%%%%%%%%%%%%%%%
%%
%% Section 1 - Introduction
%%
%%%%%%%%%%%%%%%%%%%%%%%%%%%%%%%%%

\section{Introduction}

\thispagestyle{empty}
The well-known asymptotic and logarithmic densities of a subset $A$ of the set $\N$ of natural numbers are given, respectively, by
	$$
	\operatorname{d}(A)=\lim_{n\to\infty}\frac{\sum_{k=1}^n\chi_A(k)}{n}
	\quad\text{and}\quad
	\operatorname{\delta}(A)=\lim_{n\to\infty}\frac{\sum_{k=1}^n\frac{\chi_A(k)}{k}}{\log n}
	$$
provided that the limits exist. Here $\chi_A$ denotes the characteristic function of the set $A$. Replacing the limits with limit inferiors or with limit superiors, which always exist, we get the corresponding lower densities $\underline{\operatorname{d}}(A),\, \underline{\operatorname{\delta}}(A)$ and the upper densities $\overline{\operatorname{d}}(A),\, \overline{\operatorname{\delta}}(A)$. These quantities are related to each other by means of the well-known inequalities 
	\begin{equation}\label{asymptotic-logarithmic}
	0\leq \underline{\operatorname{d}}(A)
	\leq \underline{\operatorname{\delta}}(A)
	\leq \overline{\operatorname{\delta}}(A)
	\leq \overline{\operatorname{d}}(A)\leq 1,
	\end{equation}
see \cite[p.~241]{HR} or \cite[p.~416]{Tenenbaum}. In the literature, the asymptotic density is also known by the names of linear density and natural density.

Being able to quantify the sizes of sets $A\subset\N$ is particularly important in (analytic) number theory \cite{NZM, Tenenbaum} and in the theories of sequences \cite{HR} and series \cite{PS}. As for a simple example, if $A$ is the set of all even (or odd) natural numbers and if $B$ is the set of all prime numbers, then $\operatorname{d}(A)=1/2$ and $\operatorname{d}(B)=0$. The latter is a simple consequence of the celebrated prime number theorem. 

Naturally, the situation calls for a more advanced quantification of sets. The concept of a \textit{weighted density}, introduced in \cite{A, RV}, provides a generalization of both asymptotic and logarithmic densities. Indeed, a weighted density is determined by a function $w: \mathbb{N}\to (0,\infty)$, known as a \textit{weight function}, which satisfies the  conditions
	\begin{equation}\label{ulam}
	\sum_{k=1}^{\infty}w(k)=\infty 
	\quad \text{and}\quad 
	\lim_{n\to\infty}\frac{w(n)}{\sum_{k=1}^{n}w(k)}=0.
	\end{equation}
Given a set $A\subset \N$, the \textit{lower and upper weighted $w$-densities with respect to $w$} are defined, respectively, by
	\begin{align}\label{w-density}
	\underline{\operatorname{d}}_w(A)
	=\liminf_{n\to\infty}\frac{\sum_{k=1}^n w(k)\chi_A(k)}{\sum_{k=1}^nw(k)} 
	\quad \text{and}\quad
	\overline{\operatorname{d}}_w(A)
	=\limsup_{n\to\infty}\frac{\sum_{k=1}^n w(k)\chi_A(k)}{\sum_{k=1}^nw(k)},
	\end{align}
Furthermore, if $f$ and $g$ are two weight functions such that the ratio $f(n)/g(n)$ is a decreasing function, then the following inequalities hold for any set $A\subset \mathbb{N}$:
	\begin{equation}\label{inq1}
	0\leq \underline{\operatorname{d}}_g(A)
	\leq \underline{\operatorname{d}}_f(A)
	\leq \overline{\operatorname{d}}_f(A)
	\leq \overline{\operatorname{d}}_g(A)\leq 1.
	\end{equation}
These inequalities can be derived from the following Rajagopal's classical result.

\begin{theorem}[{\cite[Theorem 3]{R}}]\label{L1}
Let $(s_n)_n$ be a real sequence, and let $(a_n)_n$ and $(b_n)_n$ be positive real sequences such that
	$$
	A_n=\sum_{k=1}^na_k\to \infty,\quad B_n=\sum_{k=1}^nb_n\to \infty,\quad n\to \infty.
	$$
Denote
	$$
	\sigma(s_n,a_n)=\frac{\sum_{k=1}^na_ns_n}{A_n} 
	\quad\text{and}\quad
	\sigma(s_n,b_n)=\frac{\sum_{k=1}^nb_ns_n}{B_n}.
	$$
If $(a_n/b_n)_n$ is decreasing, then
	$$
	\liminf_{n\to\infty}\sigma(s_n,b_n) \leq \liminf_{n\to\infty}\sigma(s_n,a_n)\leq \limsup_{n\to\infty}\sigma(s_n,a_n)\leq\limsup_{n\to\infty}\sigma(s_n,b_n).
	$$
\end{theorem}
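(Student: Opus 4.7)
My plan is to reduce to proving just the rightmost inequality, and then to attack it via Abel summation combined with a Toeplitz-style regular-matrix argument. The middle inequality is trivial, and replacing $s_n$ by $-s_n$ interchanges liminf and limsup while leaving the hypotheses intact, so the leftmost inequality follows from the rightmost one. I would therefore focus on
\begin{equation*}
\limsup_{n\to\infty}\sigma(s_n,a_n) \leq \limsup_{n\to\infty}\sigma(s_n,b_n) =: L.
\end{equation*}

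The key manipulation is summation by parts. Setting $S_k=\sum_{j=1}^k b_j s_j$, one has $b_k s_k = S_k - S_{k-1}$ and hence $a_k s_k = (a_k/b_k)(S_k-S_{k-1})$. Abel's identity then yields
\begin{equation*}
\sum_{k=1}^n a_k s_k = \frac{a_n}{b_n}S_n + \sum_{k=1}^{n-1}\left(\frac{a_k}{b_k}-\frac{a_{k+1}}{b_{k+1}}\right) S_k.
\end{equation*}
Dividing by $A_n$ and substituting $S_k=\sigma(s_k,b_k)\,B_k$ exhibits $\sigma(s_n,a_n)$ as a weighted sum $\sum_{k\le n} c_{n,k}\,\sigma(s_k,b_k)$ with
\begin{equation*}
c_{n,n}=\frac{a_n B_n}{b_n A_n},\qquad c_{n,k}=\frac{B_k}{A_n}\left(\frac{a_k}{b_k}-\frac{a_{k+1}}{b_{k+1}}\right)\text{ for }k<n.
\end{equation*}

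I now verify that $(c_{n,k})$ is a regular, mass-preserving weighting. Non-negativity is exactly the hypothesis that $a_k/b_k$ is decreasing. The row sums equal $1$, as can be read off by specializing the Abel identity to $s_k\equiv 1$, which forces $\sigma(s_n,a_n)=1=\sum_k c_{n,k}$. For any fixed $k_0$, the truncated column mass $\sum_{k=1}^{k_0} c_{n,k}$ is a fixed finite quantity divided by $A_n$, and hence tends to $0$ since $A_n\to\infty$. Given these three properties, I finish by a standard splitting argument: for $\varepsilon>0$ choose $N$ with $\sigma(s_k,b_k)\le L+\varepsilon$ for all $k\ge N$, write
\begin{equation*}
\sigma(s_n,a_n) = \sum_{k<N} c_{n,k}\,\sigma(s_k,b_k) + \sum_{k\ge N} c_{n,k}\,\sigma(s_k,b_k),
\end{equation*}
and note that the first (fixed-length) sum tends to $0$ by the column condition, while the second is bounded above by $(L+\varepsilon)\sum_{k\ge N} c_{n,k}\le L+\varepsilon$.

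The main obstacle, in my view, is bookkeeping rather than depth: one has to track that the weights $c_{n,k}$ are simultaneously non-negative, normalized, and columnwise null, and to handle the boundary cases $L=\pm\infty$ (the case $L=+\infty$ is vacuous; for $L=-\infty$, the same splitting with $-M$ in place of $L+\varepsilon$ forces $\sigma(s_n,a_n)\to-\infty$ because the fixed head contributes a bounded quantity times $c_{n,k}\to 0$). No additional regularity of $(s_n)$ is needed — boundedness of the head values of $\sigma(s_k,b_k)$ is automatic from finiteness of the index set.
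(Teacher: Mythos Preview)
Your argument is correct. The Abel summation is carried out cleanly, the three Toeplitz conditions (non-negativity from the monotonicity of $a_k/b_k$, row-sum $1$ from the specialization $s_k\equiv 1$, and columnwise nullity from $A_n\to\infty$) are exactly what is needed, and the handling of $L=\pm\infty$ is fine.

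As for comparison: the paper does \emph{not} prove Theorem~\ref{L1}. It is quoted verbatim from Rajagopal's paper \cite{R} and used as a black box in the first proof of Theorem~\ref{Th1}. What the paper does provide is a ``second proof'' of Theorem~\ref{Th1}, valid under the extra assumption that $\psi,\varphi$ are twice differentiable; that argument proceeds via Riemann--Stieltjes integration by parts on the step function $A_\varphi(t)$ and is, in essence, the continuous-variable analogue of your Abel summation. Your proof is more general (it establishes Rajagopal's result itself rather than only the special case needed for Theorem~\ref{Th1}) and requires no smoothness hypothesis; the paper's Riemann--Stieltjes route, on the other hand, is what it reuses later in Appendix~\ref{B} for the analytic and Abel densities, so it has some internal utility there.
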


It is worth noting that when considering a real subset $E$ of the interval $\left(r_0, R\right)$, where $0 < r_0 < R\leq \infty$, instead of an integer subset $A\subset \mathbb{N}$, the lower and upper $\psi$-densities of $E\subset \left(r_0, R\right)$ are defined \cite{B}, respectively, by
	\begin{eqnarray}\label{Barry}
	\underline{\psi-\operatorname{dens}}(E)=\liminf _{r \rightarrow R^{-}} \frac{\int_{E \cap\left[r_0, r\right)} d \psi(t)}{\psi(r)} , 
	\quad \overline{\psi-\operatorname{dens}}(E)=\limsup _{r \rightarrow R^{-}} \frac{\int_{E \cap\left[r_0, r\right)} d \psi(t)}{\psi(r)}.
	\end{eqnarray}
In this context, $\psi$ is a positive, unbounded, differentiable, and strictly increasing function on $\left(r_0, R\right)$. 
Analogously to \eqref{inq1}, it has been shown in \cite[Lemma 1]{B} that
	\begin{eqnarray}\label{inq2}
	0\leq \underline{\psi-\operatorname{dens}}(E) 
	\leq \underline{\phi-\operatorname{dens}}(E) 
	\leq \overline{\phi-\operatorname{dens}}(E) 
	\leq \overline{\psi-\operatorname{dens}}(E)\leq 1,
	\end{eqnarray}
assuming that $\psi$ is convex with respect to $\varphi$. 
Note that when the limit inferiors and superiors coincide in \eqref{Barry}, the $\psi$-density of $E$ can be expressed as
	\begin{eqnarray}\label{new3}
	\psi-\operatorname{dens}(E)=\lim _{r \rightarrow R^{-}} \frac{\int_{E \cap\left[r_0, r\right)} d\psi(t)}{\psi(r)}
	=\lim _{r \rightarrow R^{-}}\frac{\int_{E \cap[r_0, r)} \psi^{\prime}(t)\, dt}{\int_{[r_0, r)} \psi^{\prime}(t)\, dt}.
	\end{eqnarray}
By considering $E \subset \mathbb{N}$ and $R=\infty$ in \eqref{new3}, and substituting Lebesgue measure with the \textit{counting measure}, we obtain 
	\begin{eqnarray}\label{newdef}
	\psi-\operatorname{dens}(E)=\lim _{n \rightarrow \infty} \frac{\sum_{k \in E \cap \mathbb{N}_n} \psi^{\prime}(k)}{\sum_{k=1}^n \psi^{\prime}(k)}
	=\lim _{n \rightarrow \infty} \frac{\sum_{k=1}^n \psi^{\prime}(k)\chi_E(k) }{\sum_{k=1}^n \psi^{\prime}(k)},
	\end{eqnarray}
where $\mathbb{N}_n=\{1, \ldots, n\}$. Assuming further that $\psi$ satisfies the asymptotic identity
	\begin{align}\label{asym}
	\sum_{k=1}^n\psi'(k)\sim \psi(n),\quad n\to\infty,
	\end{align} 
we then have
	\begin{eqnarray}\label{newdef2}
	\psi-\operatorname{dens}(E)=\lim _{n \rightarrow \infty} \frac{\sum_{k=1}^n \psi^{\prime}(k)\chi_E(k) }{\psi(n)},
	\end{eqnarray}
which  bears a stronger resemblance to \eqref{Barry} than to \eqref{w-density}.

\medskip
Motivated by \eqref{newdef} and \eqref{newdef2}, we propose the following definition for the weighted density of integer sets that is based on strictly increasing differentiable functions, typically satisfying \eqref{asym}, which takes the role of the second condition in \eqref{ulam}.

\begin{definition}\label{Def1}
Let $\psi: (0,\infty) \to (0,\infty)$ be a strictly increasing, differentiable, and unbounded function. The \textit{lower and upper $\psi$-densities} of a set $A\subset\N$ are defined, respectively, by
\begin{eqnarray*}
	\underline{\operatorname{d}}_\psi(A)
	=\liminf_{n\to\infty}\frac{\sum_{k=1}^n\psi'(k)\chi_A(k)}{\sum_{k=1}^n\psi'(k)}	
	\quad\textnormal{and}\quad
\overline{\operatorname{d}}_\psi(A)
	=\limsup_{n\to\infty}\frac{\sum_{k=1}^n\psi'(k)\chi_A(k)}{\sum_{k=1}^n\psi'(k)}.
	\end{eqnarray*}
If $\underline{\operatorname{d}}_\psi(A)= \overline{\operatorname{d}}_\psi(A)$, then their common value $\operatorname{d}_\psi(A)$ is called the \textit{$\psi$-density} of $A$. Additionally, if $\psi$ satisfies \eqref{asym}, then $\operatorname{d}_\psi(A)$ is called the \textit{asymptotic $\psi$-density} of $A$.
\end{definition}

Based on the fact that $\psi'(x)>0$ for all $x>0$, it is evident that for any set $A\subset\mathbb{N}$, the following inequalities hold
	$$
	0\leq \underline{\operatorname{d}}_\psi(A)\leq \overline{\operatorname{d}}_\psi(A)\leq 1.
	$$
In addition, we have 
	$$
\overline{\operatorname{d}}_\psi(A^c)+\underline{\operatorname{d}}_\psi(A)=1,
	$$ 
where $A^c$ represents the complement of $A$ in $\mathbb{N}$. Note that both the $\psi$-density and the asymptotic $\psi$-density satisfy the axiomatic definition of a density, as proposed in \cite{Grekos}. We point out here that $\psi$-density and the asymptotic $\psi$-density can be defined without requiring $\psi$ to be unbounded. For instance, consider the function $$\psi(x)=\frac{1}{e-1}-e^{-x},$$
which is a positive, strictly increasing, differentiable, and bounded function on $[1,\infty)$. It also satisfies \eqref{asym}. Such functions may lead to the corresponding $\psi$-density and asymptotic $\psi$-density not satisfying the finiteness axiom, which requires that for any finite subset $A$ of $\mathbb{N}$, we have $\operatorname{d}_{\psi}(A) = 0$.

Observe that the asymptotic $\psi$-density aligns with the asymptotic (linear) and logarithmic densities when $\psi(x)=x$ and $\psi(x)=\log x$, respectively. 
In these two cases, the asymptotic $\psi$-density gives a better justification for the names of the densities than the weighted $w$-density. 
%Further, the definition of the asymptotic $\psi$-density bears a stronger resemblance to \eqref{Barry} than to \eqref{w-density}. 
Also note that the logarithmic density is in fact an asymptotic $\log$-density, because it is well known from elementary real analysis that the function $\psi(x)=\log x$ satisfies \eqref{asym}.

The rest of this paper is organized as follows. In Section~\ref{S2} we introduce two classes of functions, $\D_1$ and $\D_2$, based on concavity and convexity, such that the functions $\psi$ in these classes satisfy \eqref{asym}. We then proceed to prove a generalization of \eqref{asymptotic-logarithmic} for asymptotic $\psi$-densities, and also discuss the equivalence in existence of two given asymptotic $\psi$-densities. In Section~\ref{increasing-sequences-sec} we discuss the asymptotic $\psi$-density of strictly increasing sequences. In Section~\ref{S3} we consider the regularity of asymptotic $\psi$-densities for functions $\psi$ in the classes $\D_1$ and~$\D_2$. 
A function $\psi$ and the corresponding asymptotic $\psi$-density $\operatorname{d}_\psi$ are called regular, if $\operatorname{d}_\psi(\mathcal{A})=1/a$ for any integer set $\mathcal{A}$ that consists of arithmetic progressions of the form $an+b$, where $a$ and $b$ are fixed positive integers.
This paper is concluded by two appendices that contain clarifying discussions as well as alternative proofs of known results involving analytic and Abel densities.

%%%%%%%%%%%%%%%%%%%%%%%%%%%%%%%%%%%%%%%%%
%%
%% Section 2 - psi-asymptotic densities
%%
%%%%%%%%%%%%%%%%%%%%%%%%%%%%%%%%%%%%%%%%%

\section{Asymptotic $\psi$-densities}\label{S2}

We introduce a function class $\mathcal{D}_1$ of concave functions with certain properties, and a function class $\mathcal{D}_2$ of convex functions with corresponding properties. The functions $\psi$ in these two classes satisfy \eqref{asym}, which defines the concept of asymptotic $\psi$-density. We then proceed to give a comparison between asymptotic densities for two distinct weight functions $\psi$ and $\varphi$, and study the equivalence of these two densities.

\subsection{Function classes $\mathcal{D}_1$ and $\mathcal{D}_2$} 

We begin by defining a class $\mathcal{D}$ of strictly increasing, differentiable, and unbounded functions $\psi: (0,\infty) \to (0,\infty)$. For convenience, we set
	\begin{equation}\label{notation-sum}
	A_{\psi }(n):=\sum_{k=1}^n\psi'(k)\chi_A(k)
	\end{equation}
for any subset $A \subset \N$. Then,
if $\psi \in \mathcal{D}$ satisfies \eqref{asym}, the lower and upper asymptotic $\psi$-densities can be expressed more briefly as
	$$
	\underline{\operatorname{d}}_{\psi}(A)
	=\liminf_{n\to\infty}\frac{A_{\psi}(n)}{\psi(n)} 
	\quad \text{and} \quad
	\overline{\operatorname{d}}_\psi(A)
	=\limsup_{n\to\infty}\frac{A_{\psi}(n)}{\psi(n)}.
	$$
At times, we consider the above $A_\psi$ as a step function, defined for the real variable $x$ by
\[
A_\psi(x) =
\begin{cases}
	\sum_{1\leq k\leq x}\psi'(k)\chi_A(k), & \text{if } x\geq 1, \\
	0, & \text{if } 0\leq x<1.
\end{cases}
\]

We proceed to discuss which strictly increasing, differentiable, and unbounded functions $\psi$ satisfy \eqref{asym}. To this end, we introduce two classes of functions based on concavity and convexity. 

\begin{definition}
Let $\D_1$ denote the class of all concave functions in $\D$, and let $\D_2$ denote the class of all convex functions $\psi$ in $\D$ that satisfy
	\begin{equation}\label{n+1-asymp-n}
	\psi(n+1)\sim \psi(n),\quad n\to\infty.
	\end{equation}
\end{definition}

The classes $\D_1$ and $\D_2$ are certainly not empty: The function $\psi_1(x)=\log (1+x)$ belongs to $\D_1$, while $\psi_2(x)=x^2$ belongs to $\D_2$.

The reason for the interval $(0,\infty)$ to be the domain of definition for functions in $\D$ is nothing but technical. It would be acceptable to replace this interval by $(r_0,\infty)$, where $r_0>0$ is some constant. This would only cause some minor modifications in the reasoning below. Hence, the function $\psi(x)=\log x$, for example, would be acceptable for the class $\D_1$. The assumption $\psi(x)>0$ cannot be ignored, however. 

Lemma~\ref{asymptotic-lemma} below shows that all functions $\psi\in\D_1$ satisfy \eqref{n+1-asymp-n}. However, one should note that not all convex functions satisfy \eqref{n+1-asymp-n}, as illustrated by the counter-example $\psi(x)=e^x$.

\begin{lemma}\label{asymptotic-lemma}
If $\psi\in\D_1$, then \eqref{asym} and \eqref{n+1-asymp-n} both hold. If $\psi\in\D_2$, then \eqref{asym} holds.
\end{lemma}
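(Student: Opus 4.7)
The plan is to bracket the partial sum $\sum_{k=1}^n\psi'(k)$ between telescoping Riemann-type sums, using that $\psi'$ is monotone on $(0,\infty)$ (decreasing when $\psi\in\D_1$, increasing when $\psi\in\D_2$), and then to reduce \eqref{asym} to the asymptotic identity \eqref{n+1-asymp-n}. The identity \eqref{n+1-asymp-n} is given outright for $\psi\in\D_2$, so the only genuine work is to verify it for $\psi\in\D_1$.

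For the integral comparison in the case $\psi\in\D_1$, monotonicity of $\psi'$ yields $\psi(k+1)-\psi(k)=\int_k^{k+1}\psi'(t)\,dt\leq\psi'(k)$ for $k\geq 1$, and $\psi'(k)\leq\int_{k-1}^k\psi'(t)\,dt=\psi(k)-\psi(k-1)$ for $k\geq 2$. Telescoping the first inequality from $k=1$ to $n$ and the second from $k=2$ to $n$ (adding back $\psi'(1)$) gives
$$\psi(n+1)-\psi(1)\leq\sum_{k=1}^n\psi'(k)\leq\psi(n)-\psi(1)+\psi'(1).$$
The case $\psi\in\D_2$ is analogous with the two inequalities swapped. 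Dividing throughout by $\psi(n)$ and using $\psi(n)\to\infty$, one sees that \eqref{asym} follows as soon as $\psi(n+1)/\psi(n)\to 1$.

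To prove \eqref{n+1-asymp-n} for $\psi\in\D_1$, I would fix some $x_0>0$. Since $\psi'$ is positive and decreasing,
$$\psi(x)-\psi(x_0)=\int_{x_0}^x\psi'(t)\,dt\geq\psi'(x)(x-x_0),\qquad x>x_0,$$
so $\psi'(x)\leq\psi(x)/(x-x_0)$ for all large $x$. Combined with the estimate $\psi(n+1)-\psi(n)\leq\psi'(n)$, which follows from the same monotonicity (the integrand on $[n,n+1]$ is dominated by its value at $n$), this gives
$$0\leq\frac{\psi(n+1)}{\psi(n)}-1\leq\frac{\psi'(n)}{\psi(n)}\leq\frac{1}{n-x_0}\longrightarrow 0,$$
as required. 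The main obstacle is this last estimate, which encodes the informal fact that a concave, unbounded, increasing function cannot grow faster than linearly, so its unit increments $\psi(n+1)-\psi(n)$ are asymptotically negligible compared to $\psi(n)$ itself; once that is in hand, the rest is the routine Riemann-sum comparison of the previous paragraph.
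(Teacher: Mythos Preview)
Your argument is correct and follows essentially the same route as the paper: both bracket $\sum_{k=1}^n\psi'(k)$ between $\psi(n+1)-\psi(1)$ and $\psi(n)+\psi'(1)-\psi(1)$ via the monotonicity of $\psi'$, and then divide by $\psi(n)$. The only difference is your separate derivation of \eqref{n+1-asymp-n} for $\psi\in\D_1$: the paper simply reads it off from the bracket itself (the two extreme terms give $\psi(n+1)\leq\psi(n)+\psi'(1)$, hence $\psi(n+1)/\psi(n)\to 1$), whereas you take a small detour through the concavity bound $\psi'(x)\leq\psi(x)/(x-x_0)$. Your detour is valid but unnecessary, since the inequality you need is already contained in the telescoping estimate you wrote down.
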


\begin{proof}
	Suppose that $\psi\in\D_1$. Then $\psi'$ is a decreasing function, and, by the endpoint rules of integral calculus,
	\begin{eqnarray*}
		\psi(n+1)-\psi(1)&=&\int_1^{n+1}\psi'(x)\, dx\leq\sum_{k=1}^n\psi'(k)\\
		&\leq& \psi'(1)+\int_1^n\psi'(x)\, dx=\psi(n)+\psi'(1)-\psi(1).
	\end{eqnarray*}
	Since the upper bound is $\leq \psi(n+1)+\psi'(1)-\psi(1)$, we obtain \eqref{asym} and \eqref{n+1-asymp-n}. 
	
Suppose then that $\psi\in\D_2$. Then $\psi'$ is an increasing function, and
	\begin{eqnarray*}
		\psi(n)+\psi'(1)-\psi(1)&=&\psi'(1)+\int_1^{n}\psi'(x)\, dx\leq\sum_{k=1}^n\psi'(k)\\
		&\leq& \int_1^{n+1}\psi'(x)\, dx=\psi(n+1)-\psi(1).
	\end{eqnarray*}
The assertion \eqref{asym} follows from this via the assumption \eqref{n+1-asymp-n}.
\end{proof}

\begin{remark}
Lemma~\ref{asymptotic-lemma} ensures that $\operatorname{d}_\psi$ is an asymptotic $\psi$-density for functions $\psi\in \D_1\cup\D_2$. In fact, this property justifies the definition of the classes $\D_1$ and $\D_2$.
\end{remark}

\begin{example}\label{p-sum-ex}
Using Lemma~\ref{asymptotic-lemma}, we can obtain the known asymptotic growth of $p$-sums. Indeed, let $\psi(x)=x^{p+1}$ for $x>0$ and $p>-1$. Then $\psi\in \D_1$ for $-1<p\leq 0$ and $\psi\in\D_2$ for $p\geq 0$. Moreover, by Lemma~\ref{asymptotic-lemma},
	$$
	\sum_{k=1}^n k^p=\frac{1}{p+1}\sum_{k=1}^n\psi'(k)\sim \frac{\psi(n)}{p+1}=\frac{n^{p+1}}{p+1},\quad p>-1.
	$$
\end{example}

For $\psi\in\D_1$, we can replace $n$ by a real variable $x$ in the proof of Lemma~\ref{asymptotic-lemma}, and obtain
	\begin{equation}\label{x+1-asymp-x}
	\psi(x+1)\sim \psi(x),\quad x\to\infty.
	\end{equation}
To prove that \eqref{x+1-asymp-x} holds for functions $\psi\in \D_2$ also, we first require a lemma that will be needed several times later on.

\begin{lemma}\label{tends-to-zero}
Let $\psi:(0,\infty)\to (0,\infty)$ be an increasing, differentiable and convex function. Then 
\begin{itemize}
\item[\textnormal{(a)}] 
\eqref{n+1-asymp-n} holds if and only if $\psi'(n)/\psi(n)\to 0$ as $n\to\infty$,
\item[\textnormal{(b)}]
\eqref{x+1-asymp-x} holds if and only if $\psi'(x)/\psi(x)\to 0$ as $x\to\infty$.
\end{itemize}
\end{lemma}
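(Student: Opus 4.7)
The plan is to derive a two-sided inequality sandwiching the forward difference $\psi(x+1)-\psi(x)$ between $\psi'(x)$ and $\psi'(x+1)$, and then use it to relate $\psi(x+1)/\psi(x)-1$ to $\psi'(x)/\psi(x)$ in both directions. The same inequality specialized to integers will give (a); the general statement gives (b).

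First I would apply the mean value theorem on $[x,x+1]$: there exists $\xi\in(x,x+1)$ with $\psi(x+1)-\psi(x)=\psi'(\xi)$. Since $\psi$ is convex, $\psi'$ is increasing, so
\[
\psi'(x)\;\leq\;\psi(x+1)-\psi(x)\;\leq\;\psi'(x+1).
\]
Dividing by $\psi(x)>0$ gives
\[
\frac{\psi'(x)}{\psi(x)}\;\leq\;\frac{\psi(x+1)}{\psi(x)}-1\;\leq\;\frac{\psi'(x+1)}{\psi(x)}.
\]
The easy direction of (b) is now immediate: if $\psi(x+1)\sim\psi(x)$, then the left inequality forces $\psi'(x)/\psi(x)\to 0$.

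The reverse direction is where care is required, and this is the main obstacle: the upper bound features $\psi'(x+1)$ (not $\psi'(x)$) divided by $\psi(x)$ (not $\psi(x+1)$), so one cannot directly conclude from $\psi'(x)/\psi(x)\to 0$. To close the loop I would factor
\[
\frac{\psi'(x+1)}{\psi(x)}\;=\;\frac{\psi'(x+1)}{\psi(x+1)}\cdot\frac{\psi(x+1)}{\psi(x)},
\]
set $r(x):=\psi(x+1)/\psi(x)\geq 1$, and rearrange the resulting inequality
\[
r(x)-1\;\leq\;\frac{\psi'(x+1)}{\psi(x+1)}\,r(x)
\]
to obtain
\[
r(x)\;\leq\;\frac{1}{1-\psi'(x+1)/\psi(x+1)}
\]
whenever the denominator is positive. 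Assuming $\psi'(x)/\psi(x)\to 0$, the shifted quantity $\psi'(x+1)/\psi(x+1)$ also tends to $0$, so the right-hand side tends to $1$, forcing $r(x)\to 1$. This proves (b).

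Finally, part (a) follows by restricting $x$ to integer values $n$; the sandwich $\psi'(n)\leq\psi(n+1)-\psi(n)\leq\psi'(n+1)$ and the factorization trick work verbatim, using that $\psi'(n+1)/\psi(n+1)\to 0$ is just a shift of the sequence $\psi'(n)/\psi(n)\to 0$. I would present both parts in a single argument, noting that nothing in the manipulation used integer-valuedness of the variable.
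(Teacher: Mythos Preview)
Your proof is correct and follows essentially the same approach as the paper: both obtain the sandwich $\psi'(x)\leq \psi(x+1)-\psi(x)\leq \psi'(x+1)$ from convexity and the mean value theorem, use the left half for one implication, and use the right half (together with the shift $\psi'(x+1)/\psi(x+1)\to 0$) for the converse. The only cosmetic difference is that the paper divides the right inequality by $\psi(x+1)$ to get $1-\psi(x)/\psi(x+1)\leq \psi'(x+1)/\psi(x+1)$ directly, whereas you divide by $\psi(x)$ and rearrange via $r(x)$; these are algebraically equivalent.
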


\begin{proof}
It suffices to prove (b) because the proof of (a) is the same.

Assume that $\psi(x+1)\sim \psi(x)$. By utilizing the convexity of $\psi$ and the mean value theorem, we have 
	\begin{eqnarray}\label{eq2}
	\psi'(x)\leq \psi(x+1)-\psi(x)\leq \psi'(x+1),
	\end{eqnarray}
for any $x\in (0,\infty)$. Dividing both sides of \eqref{eq2} by $\psi(x)$, we obtain
	$$
	0\leq \frac{\psi'(x)}{\psi(x)} \leq \frac{\psi(x+1)}{\psi(x)}-1,
	$$
and the assertion $\psi'(x)/\psi(x)\to 0$ as $x\to\infty$ follows by the squeeze theorem. 

Conversely, assume that $\psi'(x)/\psi(x)\to 0$ as $x\to\infty$. Dividing both sides of \eqref{eq2} by $\psi(x+1)$ yields
	$$
	\frac{\psi'(x+1)}{\psi(x+1)}\geq 1-\frac{\psi(x)}{\psi(x+1)}\geq 0.
	$$
By letting again $x\to \infty$, we derive \eqref{x+1-asymp-x}.
\end{proof}

\begin{lemma}\label{x-to-infty}
If $\psi\in\D_2$, then \eqref{x+1-asymp-x} holds.
\end{lemma}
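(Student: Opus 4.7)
The plan is to reduce the claim to part (b) of Lemma~\ref{tends-to-zero}: it suffices to show that $\psi'(x)/\psi(x)\to 0$ as $x\to\infty$ when $\psi\in\D_2$. The information I already have is the integer version. Indeed, since $\psi\in\D_2$ is convex, increasing and satisfies \eqref{n+1-asymp-n} by definition, part (a) of Lemma~\ref{tends-to-zero} immediately gives $\psi'(n)/\psi(n)\to 0$ as $n\to\infty$ through integers. The only real work is interpolating between the integers.

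To do this, I would pick $x\geq 1$ and let $n=\lfloor x\rfloor$, so $n\leq x\leq n+1$. Using monotonicity of $\psi$ and monotonicity of $\psi'$ (which holds because $\psi$ is convex), I get
\[
\frac{\psi'(x)}{\psi(x)}\leq \frac{\psi'(n+1)}{\psi(n)} = \frac{\psi'(n+1)}{\psi(n+1)}\cdot \frac{\psi(n+1)}{\psi(n)}.
\]
The first factor tends to $0$ by Lemma~\ref{tends-to-zero}(a) applied to $\psi\in\D_2$, and the second factor tends to $1$ by the defining condition \eqref{n+1-asymp-n} of the class $\D_2$. Hence the product tends to $0$, and since $\psi'(x)/\psi(x)\geq 0$, the squeeze theorem yields $\psi'(x)/\psi(x)\to 0$ as $x\to\infty$.

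With this in hand, applying the converse direction of Lemma~\ref{tends-to-zero}(b) delivers \eqref{x+1-asymp-x}, which is the conclusion. I do not expect any serious obstacle; the only subtlety is that \eqref{n+1-asymp-n} is assumed only along the integers for $\psi\in\D_2$, so one must carefully use the monotonicity of both $\psi$ and $\psi'$ to bound $\psi'(x)/\psi(x)$ by an expression depending only on integer values of $\psi$ and $\psi'$ before passing to the limit.
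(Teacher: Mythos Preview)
Your proof is correct and follows essentially the same approach as the paper: both reduce to showing $\psi'(x)/\psi(x)\to 0$ via the bound $\psi'(x)/\psi(x)\leq \psi'(n+1)/\psi(n)$ for $n\leq x\leq n+1$, using monotonicity of $\psi$ and of $\psi'$. The only cosmetic difference is that the paper phrases this as a contradiction argument, whereas you give the direct version; your explicit factorization $\psi'(n+1)/\psi(n)=\big(\psi'(n+1)/\psi(n+1)\big)\cdot\big(\psi(n+1)/\psi(n)\big)$ makes transparent a step the paper leaves implicit.
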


\begin{proof}
Suppose on the contrary to the assertion that \eqref{x+1-asymp-x} does not hold. Then, by Lemma~\ref{tends-to-zero}, the logarithmic derivative $\psi'(x)/\psi(x)$ does not tend to zero along the real numbers, even though $\psi'(n)/\psi(n)$ does tend to zero along the natural numbers. Therefore, there is a strictly increasing and unbounded sequence $(x_n)_n$ of positive real numbers and a constant $\delta>0$ such that
	$$
	\psi'(x_n)/\psi(x_n)>\delta,\quad n\in\N.
	$$
Clearly, the sequence $(x_n)_n$ has at most a finite number of points $x_n$ that are natural numbers. We may assume that these points, if any, have been discarded from the sequence $(x_n)_n$. Now, for every $x_n$ there is a unique $k_n\in\N$ such that $x_n\in (k_n,k_n+1)$. Then, by the monotonicity and convexity of $\psi$,
	$$
	0<\delta<\frac{\psi'(x_n)}{\psi(x_n)}\leq \frac{\psi'(k_n+1)}{\psi(k_n)}\to 0,\quad n\to\infty,
	$$
which is a contradiction. Thus \eqref{x+1-asymp-x} holds.
\end{proof}

A further discussion on functions $\psi$ satisfying  \eqref{x+1-asymp-x} will be given in Section~\ref{regularity-D2} and in Appendix~\ref{App} below. We show, among other things, that there are strictly increasing, differentiable and convex functions $\psi$ that do not satisfy \eqref{x+1-asymp-x}, and also ponder the strength of the asymptotic identity \eqref{x+1-asymp-x} for certain functions $\psi$. This discussion can be of independent~interest.

\subsection{Comparison of asymptotic $\psi$-densities}
%For a comparison of asymptotic $\psi$-densities, we present the following definition.

%{\color{red}\begin{definition}	
%Suppose that $\psi,\varphi\in\mathcal{D}$ satisfy \eqref{asym}. We say that the asymptotic $\psi$-density is \textit{stronger} than the asymptotic $\varphi$-density if 
%	$$
%	\left( \frac{A_{\psi}(n)}{\psi(n)}\right)_n \text{ converges} 
%	\ \implies \
%	\left( \frac{A_{\varphi}(n)}{\varphi(n)}\right)_n \text{ converges}
%	$$
%for any $A\subset \N$. If both densities are stronger than the other, we say that the densities are \textit{equivalent}. If, in addition, the above sequences converge to the same limit, we say that the densities are \textit{strongly equivalent}.
%\end{definition}}
We proceed with recalling the concepts of relative convexity and concavity. Let $\psi$ and $\varphi$ be functions in $\mathcal{D}$. We say that $\varphi$ is convex with respect to $\psi$ if the function $\varphi \circ \psi^{-1}$ is convex. Equivalently, $\varphi$ is convex with respect to $\psi$ (or $\psi$ is concave with respect to $\varphi$) if the quotient $\varphi'(x)/\psi'(x)$ is increasing for all $x \in (0,\infty)$, as shown in \cite[Theorem 1]{C}. In other words, if both $\psi$ and $\varphi$ are twice differentiable functions, then it follows that
	$$
	\left( \frac{\psi'(x)}{\varphi'(x)}\right) ' \leq 0, \quad x\in (0,\infty).
	$$

We are now ready to present a result which is similar to \eqref{inq1} and \eqref{inq2}.

\begin{theorem}\label{Th1}
Let $A\subset\N$, and suppose that $\psi, \varphi\in\mathcal{D}$ satisfy \eqref{asym}. If $\psi$ is concave with respect to $\varphi$, then
	\begin{align}\label{key}
	0 \leq \underline{\operatorname{d}}_{\varphi}(A)\leq \underline{\operatorname{d}}_{\psi}(A) \leq \overline{\operatorname{d}}_{\psi}(A)\leq \overline{\operatorname{d}}_{\varphi}(A)\leq 1.
	\end{align}
\end{theorem}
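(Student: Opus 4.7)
The plan is to derive Theorem~\ref{Th1} as a direct specialization of Rajagopal's inequality (Theorem~\ref{L1}) to the bounded sequence $s_k := \chi_A(k)$ with weight sequences $a_k := \psi'(k)$ and $b_k := \varphi'(k)$.

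First I would verify each hypothesis of Theorem~\ref{L1}. Positivity of $a_k, b_k$ is immediate from the definition of $\mathcal{D}$, since $\psi, \varphi$ are strictly increasing and differentiable. Divergence of the partial sums $A_n := \sum_{k=1}^n \psi'(k)$ and $B_n := \sum_{k=1}^n \varphi'(k)$ follows from the unboundedness of $\psi, \varphi$ together with \eqref{asym}: indeed $A_n \sim \psi(n) \to \infty$, and similarly $B_n \sim \varphi(n) \to \infty$. The monotonicity hypothesis, namely that $a_k/b_k = \psi'(k)/\varphi'(k)$ is a decreasing sequence, is furnished by the assumption that $\psi$ is concave with respect to $\varphi$. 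Recall from the paragraph preceding the theorem that this is equivalent to $\psi'(x)/\varphi'(x)$ being decreasing as a function on $(0,\infty)$, and hence in particular decreasing along the natural numbers.

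With the hypotheses in place, Theorem~\ref{L1} yields
\begin{align*}
\liminf_{n\to\infty}\frac{\sum_{k=1}^n \varphi'(k)\chi_A(k)}{\sum_{k=1}^n \varphi'(k)}
&\leq \liminf_{n\to\infty}\frac{\sum_{k=1}^n \psi'(k)\chi_A(k)}{\sum_{k=1}^n \psi'(k)} \\
&\leq \limsup_{n\to\infty}\frac{\sum_{k=1}^n \psi'(k)\chi_A(k)}{\sum_{k=1}^n \psi'(k)}
\leq \limsup_{n\to\infty}\frac{\sum_{k=1}^n \varphi'(k)\chi_A(k)}{\sum_{k=1}^n \varphi'(k)}.
\end{align*}
By Definition~\ref{Def1}, this is exactly the inner chain $\underline{\operatorname{d}}_\varphi(A) \leq \underline{\operatorname{d}}_\psi(A) \leq \overline{\operatorname{d}}_\psi(A) \leq \overline{\operatorname{d}}_\varphi(A)$ appearing in \eqref{key}. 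The outer bounds $0 \leq \underline{\operatorname{d}}_\varphi(A)$ and $\overline{\operatorname{d}}_\varphi(A) \leq 1$ are immediate from $0 \leq \chi_A(k) \leq 1$ and the positivity of $\varphi'$.

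There is no genuine obstacle here: the theorem is essentially a transcription of Rajagopal's classical inequality into the language of asymptotic $\psi$-densities. The only point worth flagging is the role of \eqref{asym}. It is not used in the inequality chain itself — Rajagopal's theorem already compares the weighted Cesàro averages in their ``summed'' form. Its function is purely to guarantee that $A_n, B_n \to \infty$, so that Theorem~\ref{L1} is applicable, and to justify the prefix ``asymptotic'' in Definition~\ref{Def1}. Consequently the same argument yields the analogous chain for the $\psi$-density (without the ``asymptotic'') whenever $\sum \psi'(k)$ and $\sum \varphi'(k)$ both diverge.
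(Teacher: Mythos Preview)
Your proof is correct and matches the paper's first proof essentially verbatim: both verify the hypotheses of Rajagopal's theorem (Theorem~\ref{L1}) with $s_k=\chi_A(k)$, $a_k=\psi'(k)$, $b_k=\varphi'(k)$, using \eqref{asym} for the divergence of the partial sums and the relative-concavity assumption for the monotonicity of $\psi'/\varphi'$. The paper additionally presents a second, self-contained proof via Riemann-Stieltjes integration (under the extra assumption that $\psi,\varphi$ are twice differentiable), but your argument coincides with the primary one.
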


As per the terminology introduced in the literature, the two densities satisfying \eqref{key} are comparable. More specifically, the $\varphi$-density is stronger than the $\psi$-density, and the $\psi$-density extends the $\varphi$-density.

For the convenience of the reader, we present two distinct proofs for Theorem~\ref{Th1}. The first proof utilizes Rajagopal's result, while the second one relies on Riemann-Stieltjes integration, but requires the assumption that $\psi$ and $\varphi$ are both twice differentiable. The technique used in the second proof will be applied later on in this~paper.

\begin{proof}[First proof of Theorem~\ref{Th1}]
Given that $\psi$ and $\varphi$ belong to $\D$ and satisfy \eqref{asym}, we observe that
	$$
	\sum_{k=1}^n\psi'(k)\to \infty 
	\quad \text{and} \quad 
	\sum_{k=1}^n\varphi'(k)\to \infty, \quad n\to 	\infty.
	$$
On the other hand, due to the fact that $\psi$ is concave with respect to $\varphi$, we conclude that $\psi'(x)/\varphi'(x)$ is decreasing. Therefore, by using Theorem~\ref{L1}, we establish the assertion \eqref{key}.
\end{proof}

\begin{proof}[Second proof of Theorem~\ref{Th1}]
We make an additional assumption that $\psi$ and $\varphi$ are both twice differentiable.
It suffices to prove that
	$$
	\underline{\operatorname{d}}_\varphi(A) \leq \underline{\operatorname{d}}_{\psi}(A)
	\quad\textnormal{and}\quad	
	\overline{\operatorname{d}}_{\psi}(A) \leq \overline{\operatorname{d}}_\varphi(A).
	$$
The first inequality is trivial if $\underline{\operatorname{d}}_\varphi(A)=0$, so we suppose that $\underline{\operatorname{d}}_\varphi(A)>0$. The proof of the second inequality above does not depend on this restriction.
	
Applying Riemann-Stieltjes integration to the step function $A_{\varphi}(x)$ yields
	\begin{eqnarray*}
	A_{\psi}(n)&=&\sum_{k=1}^{n}\psi'(k)\chi_A(k)
	=\sum_{k=1}^{n}\frac{\psi'(k)}{\varphi'(k)}\varphi'(k)\chi_A(k)\\
	&=& \sum_{k=1}^{n}\frac{\psi'(k)}{\varphi'(k)}\big(A_\varphi(k)-A_\varphi(k-1)\big)
	=\int_1^n\frac{\psi'(t)}{\varphi'(t)}dA_{\varphi}(t),
	\end{eqnarray*}
see \cite[Theorem 7.9]{Apostol} and \cite[Theorem 7.11]{Apostol}.
Integrating by parts, we get 
	\begin{eqnarray}\label{e}
	\begin{split}
	A_{\psi}(n)
	&=\frac{\psi'(n)}{\varphi'(n)}A_{\varphi}(n)-\int_1^n\left( \frac{\psi'(t)}{\varphi'(t)}\right) 'A_{\varphi}(t)\,dt+O(1).
	\end{split}
	\end{eqnarray}
	
Let $\varepsilon\in (0,\underline{\operatorname{d}}_{\varphi}(A))$. Then there exists a $t_0=t_0(\varepsilon)>0$ such that
	$$
	\underline{\operatorname{d}}_{\varphi}(A)-\varepsilon\leq 	\frac{A_{\varphi}(t)}{\varphi(t)}\leq \overline{\operatorname{d}}_\varphi(A)+\varepsilon, \quad t\geq t_0.
	$$
Making use of this, we obtain, for all large $n\geq t_0$, 
	\begin{eqnarray}\label{e2}
	\left( \underline{\operatorname{d}}_{\varphi}(A)-\varepsilon\right) \frac{\psi'(n)}{\varphi'(n)}\varphi(n)
	\leq\frac{\psi'(n)}{\varphi'(n)} A_\varphi(n)
	\leq \left( \overline{\operatorname{d}}_\varphi(A)+\varepsilon\right) \frac{\psi'(n)}{\varphi'(n)}\varphi(n)
	\end{eqnarray}
and 
	\begin{equation}\label{e1}
	\begin{split}
	\left( \underline{\operatorname{d}}_{\varphi}(A)-\varepsilon\right)&\int_1^n\left(- \frac{\psi'(t)}{\varphi'(t)}\right) '\varphi(t)dt+O(1)
	\leq \int_1^n\left( -\frac{\psi'(t)}{\varphi'(t)}\right) 'A_{\varphi}(t)dt \\
	& \leq \left( \overline{\operatorname{d}}_\varphi(A)+\varepsilon\right)\int_1^n\left(- \frac{\psi'(t)}{\varphi'(t)}\right) '\varphi(t)dt+O(1),
	\end{split}
	\end{equation}
where $\left(- \frac{\psi'(t)}{\varphi'(t)}\right) '\varphi(t) \geq 0$ for all $t>0$. A further integration by parts gives us
	$$
	-\int_1^n\left( \frac{\psi'(t)}{\varphi'(t)}\right) '\varphi(t)dt=\psi(n)\left(1 -\frac{\psi'(n)}{\psi(n)}\frac{\varphi(n)}{\varphi'(n)}\right)  +O(1).
	$$
Substituting this into \eqref{e1} leads to
	\begin{equation*}
	\begin{split}	
	\left( \underline{\operatorname{d}}_{\varphi}(A)-\varepsilon\right) &\psi(n)\left(1 -\frac{\psi'(n)}{\psi(n)}\frac{\varphi(n)}{\varphi'(n)}\right)  +O(1) \leq \int_1^n\left( -\frac{\psi'(t)}{\varphi'(t)}\right) 'A_{\varphi}(t)dt \\
	&\leq \left( \overline{\operatorname{d}}_\varphi(A)+\varepsilon\right) \psi(n)\left(1 -\frac{\psi'(n)}{\psi(n)}\frac{\varphi(n)}{\varphi'(n)}\right)  +O(1) , \quad t\geq t_0.
	\end{split}
	\end{equation*}
Combining this with \eqref{e} and \eqref{e2}, we find after simple calculations that for all large $n\geq t_0$, 
	$$
	\left( \underline{\operatorname{d}}_{\varphi}(A)-\varepsilon\right)+o(1)\leq \frac{A_{\psi}(n)}{\psi(n)} \leq  \left( \overline{\operatorname{d}}_\varphi(A)+\varepsilon\right)+o(1).
	$$
Taking limit inferior on the first inequality and limit superior on the second inequality gives 
	$$
	\underline{\operatorname{d}}_{\varphi}(A)-\varepsilon \leq \underline{\operatorname{d}}_{ \psi}(A)
	\quad\textnormal{and}\quad	
	\overline{\operatorname{d}}_{\psi}(A) \leq\overline{\operatorname{d}}_{\varphi}(A)+\varepsilon.
	$$
The assertions follow by letting $\varepsilon\to 0^+$.
\end{proof}	

Since any concave function $\psi$ is concave with respect to the identity function, it follows that the asymptotic density can be thought of as the ``\textit{terminal density}'' for the asymptotic $\psi$-densities when $\psi \in \D_1$. Likewise, any convex function $\varphi$ is convex with respect to the identity function. Therefore, the asymptotic density is the ``\textit{initial density}'' for the asymptotic $\varphi$-densities when $\varphi \in \D_2$. These observations will be made more precise in the following immediate consequence of Theorem~\ref{Th1}.

\begin{corollary}
The following assertions hold for any $A \subset\N$. 
	\begin{itemize}
		\item[\textnormal{(a)}] If $\psi \in \D_1$, then
		$$
		0\leq \underline{\operatorname{d}}(A) \leq \underline{\operatorname{d}}_\psi(A) \leq \overline{\operatorname{d}}_\psi(A) \leq \overline{\operatorname{d}}(A) \leq 1.
		$$
		\item[\textnormal{(b)}] If $\varphi \in \D_2$, then
		$$
		0\leq \underline{\operatorname{d}}_{ \varphi}(A) \leq \underline{\operatorname{d}}(A) \leq \overline{\operatorname{d}}(A)\leq \overline{\operatorname{d}}_{\varphi}(A) \leq 1.
		$$
	\end{itemize}
\end{corollary}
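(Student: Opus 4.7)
The plan is to derive both parts of the corollary as direct consequences of Theorem~\ref{Th1}, applied with the identity function $\operatorname{id}(x)=x$ as one of the two weight functions. First I would observe that $\operatorname{id}\in\mathcal{D}$ with $\operatorname{id}'\equiv 1$, and that the asymptotic identity \eqref{asym} holds trivially, since $\sum_{k=1}^n 1 = n = \operatorname{id}(n)$. Consequently, the usual asymptotic density $\operatorname{d}$ coincides with the asymptotic $\operatorname{id}$-density, so it fits into the framework of Theorem~\ref{Th1}.

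For part (a), I would take $\varphi = \operatorname{id}$ and let $\psi\in\D_1$ play the role of $\psi$ in Theorem~\ref{Th1}. The hypothesis to verify is that $\psi$ is concave with respect to $\varphi$, which by the characterization recalled just before Theorem~\ref{Th1} amounts to checking that $\psi'(x)/\varphi'(x)=\psi'(x)$ is decreasing on $(0,\infty)$. This is exactly the statement that $\psi$ is concave, which is built into the definition of $\D_1$. By Lemma~\ref{asymptotic-lemma}, $\psi$ satisfies \eqref{asym}, so Theorem~\ref{Th1} applies and yields the chain of inequalities claimed in (a).

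For part (b), the roles are reversed: I would take $\psi = \operatorname{id}$ and let the given $\varphi\in\D_2$ play the role of $\varphi$ in Theorem~\ref{Th1}. The condition to verify is now that $\operatorname{id}$ is concave with respect to $\varphi$, i.e.\ that $\operatorname{id}'(x)/\varphi'(x)=1/\varphi'(x)$ is decreasing. Since $\varphi\in\D_2$ is convex, $\varphi'$ is increasing and positive, so $1/\varphi'$ is indeed decreasing. Again \eqref{asym} holds for both functions (trivially for $\operatorname{id}$, and by Lemma~\ref{asymptotic-lemma} for $\varphi$), so Theorem~\ref{Th1} gives the inequalities in (b).

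There is no real obstacle in this argument; the only point that could be mildly confusing is getting the assignment of roles straight, since in Theorem~\ref{Th1} the concave weight ($\psi$) produces the \emph{inner} pair of inequalities and the convex one ($\varphi$) produces the \emph{outer} pair. With this observation, both (a) and (b) reduce to one-line verifications of the relative concavity hypothesis.
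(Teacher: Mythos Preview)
Your proposal is correct and matches the paper's approach exactly: the paper presents this corollary as an immediate consequence of Theorem~\ref{Th1}, noting in the preceding paragraph that any concave $\psi$ is concave with respect to the identity function and any convex $\varphi$ is convex with respect to it. Your explicit verification that $\operatorname{id}\in\mathcal{D}$ satisfies \eqref{asym} and your use of Lemma~\ref{asymptotic-lemma} for the other weight simply make the paper's one-line justification fully explicit.
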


\subsection{Comparison of other densities}\label{equivalence-sec}

For $p>0$, set
	$$
	\zeta(p):=\sum_{n=1}^\infty\frac{1}{n^p}.
	$$
It is well-known that $\zeta(p)<\infty$ for all $p>1$ and that $\zeta(p)=\infty$ for all $0<p\leq 1$. The \textit{analytic density} of a set $A\subset\N$ is defined in \cite[p.~418]{Tenenbaum} as
	\begin{eqnarray*}\label{analy}
	\operatorname{d}_a(A):=\lim_{p\to 1^+}\frac{1}{\zeta(p)}\sum_{n=1}^\infty \frac{\chi_A(n)}{n^p},
	\end{eqnarray*}
provided that the limit exists. Alternatively (see Appendix~\ref{B}), we may write
	$$
	\operatorname{d}_a(A)=\lim_{p\to 1^+}(p-1)\sum_{n=1}^\infty \frac{\chi_A(n)}{n^p}.
	$$
The analytic density is closely related to the logarithmic density in the following sense.

\begin{theorem}[{\cite[p.~418]{Tenenbaum}}]\label{Tenenbaum-thm}
Let $A\subset\N$. Then the analytic density of $A$ exists if and only if the logarithmic density of $A$ exists. In this case, the two densities are equal.
\end{theorem}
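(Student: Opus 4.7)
The plan is to reduce both densities to the asymptotic behaviour of a single non-decreasing function and then close the loop via an Abelian--Tauberian pair. Writing $S(x):=\sum_{n\leq x}\chi_A(n)/n$ and applying the identity $n^{1-p}=(p-1)\int_n^\infty x^{-p}\,dx$ (valid for $p>1$) term by term, Fubini gives
\[
(p-1)\sum_{n=1}^\infty \frac{\chi_A(n)}{n^p}=(p-1)^2\int_1^\infty \frac{S(x)}{x^p}\,dx.
\]
The substitution $x=e^t$, together with $F(t):=S(e^t)$ and $q:=p-1$, transforms the right-hand side into $q^2\int_0^\infty F(t)e^{-qt}\,dt$. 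Crucially, $F$ is non-decreasing because $\chi_A(n)/n\geq 0$. In these variables, the analytic density of $A$ equals $\delta$ exactly when $q^2\int_0^\infty F(t)e^{-qt}\,dt\to\delta$ as $q\to 0^+$, whereas the logarithmic density of $A$ equals $\delta$ exactly when $F(t)/t\to\delta$ as $t\to\infty$.

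For the Abelian direction (logarithmic implies analytic), assuming $F(t)/t\to\delta$, I would fix $\varepsilon>0$, pick $T$ with $|F(t)/t-\delta|<\varepsilon$ on $[T,\infty)$, and split the integral at $t=T$. The initial piece contributes $O(q^2)$ as $q\to 0^+$, while the tail is pinned down by the exact evaluation $q^2\int_T^\infty te^{-qt}\,dt=(qT+1)e^{-qT}\to 1$. Letting $\varepsilon\to 0^+$ identifies the limit as $\delta$.

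For the Tauberian direction (analytic implies logarithmic), integration by parts in $q^2\int_0^\infty F(t)e^{-qt}\,dt$ rewrites it as $q\int_0^\infty e^{-qt}\,dF(t)+qF(0)$, so the hypothesis translates into $\int_0^\infty e^{-qt}\,dF(t)\sim\delta/q$ as $q\to 0^+$. Karamata's Tauberian theorem with exponent $\rho=1$, whose monotonicity hypothesis $F$ satisfies by construction, then yields $F(t)\sim\delta t$, and hence the logarithmic density exists and equals $\delta$. This Tauberian step is the main obstacle: the Abelian half is elementary, but the converse cannot be had without invoking Karamata's theorem or an equivalent, since the Abelian correspondence fails to be reversible for non-monotone $F$. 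Incidentally, this also explains why the monotonicity of $F$, inherited from the positivity of the weights $\chi_A(n)/n$, is essential to the argument.
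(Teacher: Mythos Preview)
Your proposal is correct and follows essentially the same route as the paper: both derive the integral representation $(p-1)\sum_n \chi_A(n)/n^p=(p-1)^2\int_1^\infty S(x)x^{-p}\,dx$ (you via Fubini, the paper via Riemann--Stieltjes and integration by parts), handle the Abelian direction by an $\varepsilon$--splitting argument against $\int_1^\infty (\log t)\,t^{-p}\,dt=(p-1)^{-2}$, and close the Tauberian direction by invoking Karamata's theorem. Your change of variable $x=e^t$ is a clean touch that makes the Laplace-transform structure explicit and lets you apply Karamata in its standard form; the paper merely quotes Karamata and refers to Tenenbaum for the details of that step, so in fact you have written out more than the paper does.
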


We present an alternative proof for Theorem~\ref{Tenenbaum-thm} in Appendix~\ref{B}. More generally, it is natural to ask the following question, which will be answered later in this section.

\begin{question}
Under what conditions on $\psi, \varphi\in \mathcal{D}$,  the $\psi$-density of $A$ exists if and only if the $\varphi$-density of $A$ exists? If the two densities exist, are they equal?
\end{question}

The above equivalence fails to hold between asymptotic and logarithmic densities. Indeed, it is known \cite[p.~417]{Tenenbaum} that the existence of the logarithmic density does not imply the existence of the asymptotic density. Conversely, the existence of the asymptotic density does imply the existence of the logarithmic density by \eqref{asymptotic-logarithmic}. 

The next result gives a comparison between different asymptotic $\psi$-densities and contains \cite[Proposition 1]{MS} as a special case.

\begin{theorem}\label{Equiv}
Let $A\subset\N$, and let $\psi$ and $\varphi$ be functions in $\mathcal{D}$ that satisfy \eqref{asym}. If
	\begin{eqnarray}\label{Hspital2}
	0<c_1=\liminf_{x \rightarrow \infty}\frac{\psi'(x)}{\varphi'(x)} \leq \limsup_{x \rightarrow \infty}\frac{\psi'(x)}{\varphi'(x)}=c_2<\infty,
	\end{eqnarray}
then
	\begin{eqnarray}\label{result}
	\frac{1}{c} \cdot \underline{\operatorname{d}}_{\varphi}(A)\leq  \underline{\operatorname{d}}_{\psi}(A)\leq c \cdot  \underline{\operatorname{d}}_{\varphi}(A) \quad \text{and} \quad \frac{1}{c} \cdot\overline{\operatorname{d}}_{\varphi}(A) \leq   \overline{\operatorname{d}}_{\psi}(A) \leq c \cdot  \overline{\operatorname{d}}_{\varphi}(A),
	\end{eqnarray}
where $c=c_2/c_1$. In particular, if $c=1$, then
$$
	\underline{\operatorname{d}}_{\psi}(A)=\underline{\operatorname{d}}_{\varphi}(A) \quad  \text{and} \quad \overline{\operatorname{d}}_{\psi}(A)=\overline{\operatorname{d}}_{\varphi}(A).
	$$
\end{theorem}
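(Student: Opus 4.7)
The plan is to extract two-sided pointwise bounds on the ratio $\psi'/\varphi'$ from \eqref{Hspital2}, translate them into parallel bounds for the partial sums $A_\psi(n)$ and $A_\varphi(n)$, obtain an analogous asymptotic comparison between $\psi(n)$ and $\varphi(n)$ via \eqref{asym}, and finally assemble the pieces by division. To begin, fix $\varepsilon \in (0, c_1)$. By the definitions of $c_1$ and $c_2$ there exists $N=N(\varepsilon)\in\N$ such that
\begin{equation*}
(c_1 - \varepsilon)\,\varphi'(k) \leq \psi'(k) \leq (c_2 + \varepsilon)\,\varphi'(k),\qquad k\geq N.
\end{equation*}
Multiplying by $\chi_A(k)\geq 0$ and summing from $k=N$ to $k=n$ yields
\begin{equation*}
(c_1 - \varepsilon) A_\varphi(n) + O(1) \;\leq\; A_\psi(n) \;\leq\; (c_2 + \varepsilon) A_\varphi(n) + O(1),
\end{equation*}
where the $O(1)$ absorbs the contribution of the finitely many indices $k<N$.

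Next, summing the same pointwise inequalities \emph{without} the factor $\chi_A$ and applying the asymptotic identity \eqref{asym} to both $\psi$ and $\varphi$, one deduces
\begin{equation*}
(c_1 - 2\varepsilon)\,\varphi(n) \;\leq\; \psi(n) \;\leq\; (c_2 + 2\varepsilon)\,\varphi(n)
\end{equation*}
for all $n$ sufficiently large. Dividing the upper bound on $A_\psi(n)$ by the lower bound $\psi(n)\geq(c_1-2\varepsilon)\varphi(n)$ and noting that $\varphi(n)\to\infty$, we arrive at
\begin{equation*}
\frac{A_\psi(n)}{\psi(n)} \;\leq\; \frac{c_2 + \varepsilon}{c_1 - 2\varepsilon}\cdot \frac{A_\varphi(n)}{\varphi(n)} + o(1).
\end{equation*}
Taking $\liminf$ on both sides, then $\limsup$ on both sides, and finally letting $\varepsilon\to 0^+$ produces the two upper inequalities of \eqref{result}, namely $\underline{\operatorname{d}}_\psi(A)\leq c\cdot\underline{\operatorname{d}}_\varphi(A)$ and $\overline{\operatorname{d}}_\psi(A)\leq c\cdot\overline{\operatorname{d}}_\varphi(A)$. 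A symmetric treatment, combining the lower bound $A_\psi(n)\geq (c_1-\varepsilon)A_\varphi(n)+O(1)$ with the upper bound $\psi(n)\leq (c_2+2\varepsilon)\varphi(n)$, yields the matching lower inequalities $\underline{\operatorname{d}}_\psi(A)\geq (1/c)\underline{\operatorname{d}}_\varphi(A)$ and $\overline{\operatorname{d}}_\psi(A)\geq (1/c)\overline{\operatorname{d}}_\varphi(A)$. The conclusion for $c=1$ is then immediate.

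I do not foresee a serious technical obstacle here: the argument is essentially a careful $\varepsilon$-bookkeeping exercise, and the bounds propagate cleanly because $\chi_A\geq 0$, $\psi'>0$ and $\varphi'>0$ throughout. The one conceptually substantive point is that one cannot pass directly from the L'Hospital-type hypothesis on the ratio $\psi'/\varphi'$ to bounds on the ratio $\psi/\varphi$ without some integration-style input; the role played by \eqref{asym} in the proof is precisely to supply this passage, which is why the hypothesis \eqref{asym} is imposed on both $\psi$ and $\varphi$.
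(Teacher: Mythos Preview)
Your proof is correct and follows essentially the same route as the paper: bound $\psi'(k)/\varphi'(k)$ pointwise, propagate to the partial sums $A_\psi$ and $A_\varphi$, compare the normalizers $\psi(n)$ and $\varphi(n)$, and divide. The only cosmetic difference is that the paper obtains the two-sided bound on $\psi(n)/\varphi(n)$ by citing a L'Hospital-type result of Taylor, whereas you derive it directly from \eqref{asym} by summing the derivative inequalities --- your route is a touch more self-contained and makes the role of \eqref{asym} more transparent.
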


\begin{proof}
Using \eqref{Hspital2} and \cite[Theorem II]{Taylor}, we obtain
	$$
	0<c_1=\liminf_{x \rightarrow \infty}\frac{\psi'(x)}{\varphi'(x)}\leq\liminf_{x \rightarrow \infty}\frac{\psi(x)}{\varphi (x)}\leq \limsup_{x \rightarrow \infty}\frac{\psi(x)}{\varphi (x)}\leq\limsup_{x \rightarrow \infty}\frac{\psi'(x)}{\varphi'(x)}=c_2<\infty.
	$$
Let $\varepsilon \in (0, c_1)$. Then there exists an $N = N(\varepsilon) > 0$ such that
	\begin{eqnarray}\label{inq7}
	c_1-\varepsilon \leq \frac{\psi(x)}{\varphi(x)} \leq c_2+\varepsilon \quad \text{and}\quad c_1-\varepsilon \leq \frac{\psi'(x)}{\varphi'(x)} \leq c_2+\varepsilon,\quad x\geq N.
	\end{eqnarray}
Hence, 
	$$
	\frac{1}{ c_2+\varepsilon} \leq \frac{\varphi(x)}{\psi(x)} \leq \frac{1}{c_1-\varepsilon} \quad \text{and}\quad \frac{1}{ c_2+\varepsilon} \leq \frac{\varphi'(x)}{\psi'(x)} \leq \frac{1}{c_1-\varepsilon},\quad x\geq N.
	$$
Now, let $A \subset \mathbb{N}$ be a set. Then
	\begin{eqnarray}\label{inq5}
	A_{\varphi}(n)&=&\sum_{k=N}^{n}\varphi'(k)\chi_A(k)+O(1) \leq \frac{1}{c_1-\varepsilon}\sum_{k=N}^{n} \psi'(k)\chi_A(k)+O(1)\nonumber\\
	&=& \frac{1}{c_1-\varepsilon} A_{\psi}(n)+O(1),\quad n\geq N,\nonumber
	\end{eqnarray}
and similarly,
	\begin{eqnarray}\label{inq6}
	A_{\varphi}(n) \geq \frac{1}{ c_2+\varepsilon}A_{\psi}(n)+O(1),\quad n\geq N.\nonumber
	\end{eqnarray}
Combining these estimates with \eqref{inq7}, we obtain
	$$
	\left( \frac{c_1-\varepsilon}{ c_2+\varepsilon}\right) \frac{A_{\psi}(n)}{\psi(n)}+o(1)\leq\frac{ A_{\varphi}(n)}{\varphi(n)} \leq \left( \frac{c_2+\varepsilon}{c_1-\varepsilon}\right)  \frac{A_{\psi}(n)}{\psi(n)}+o(1), \quad n\geq N.
	$$	
By using the properties of limit inferior and limit superior, we get
	$$
	\left( \frac{c_1-\varepsilon}{ c_2+\varepsilon}\right)\underline{\operatorname{d}}_{\psi}(A)\leq \underline{\operatorname{d}}_{\varphi}(A)\leq \overline{\operatorname{d}}_{\varphi}(A) \leq \left( \frac{c_2+\varepsilon}{c_1-\varepsilon}\right) \overline{\operatorname{d}}_{\psi}(A),
	$$
and by letting $\varepsilon \to 0^+$, we obtain
	\begin{eqnarray}\label{result1}
	\left( \frac{c_1}{ c_2}\right) \underline{\operatorname{d}}_{\psi}(A)\leq \underline{\operatorname{d}}_{\varphi}(A)\leq \overline{\operatorname{d}}_{\varphi}(A) \leq \left( \frac{c_2}{ c_1}\right)  \overline{\operatorname{d}}_{\psi}(A).
	\end{eqnarray}
Now, by making use of \eqref{inq7} again, one may see that
	\begin{eqnarray}
	0<\frac{1}{c_2}\leq \liminf_{x \rightarrow \infty}\frac{\varphi'(x)}{\psi'(x)}\leq \limsup_{x \rightarrow \infty}\frac{\varphi'(x)}{\psi'(x)} \leq \frac{1}{c_1}<\infty.\nonumber
	\end{eqnarray}
Hence, by using the same argument as above, we obtain
	\begin{eqnarray}\label{result2}
	\left( \frac{c_1}{ c_2}\right) \underline{\operatorname{d}}_{\varphi}(A)\leq \underline{\operatorname{d}}_{\psi}(A)\leq \overline{\operatorname{d}}_{\psi}(A) \leq \left( \frac{c_2}{ c_1}\right)  \overline{\operatorname{d}}_{\varphi}(A).
	\end{eqnarray}
Combining \eqref{result1} and \eqref{result2}, we conclude \eqref{result}.
\end{proof}

The following corollary follows directly from Theorem~\ref{Equiv}, and it gives an answer to Question 1.

\begin{corollary}\label{equiv 1}
Assume that $A\subset\N$, and let $\psi, \varphi\in\D$ be such that
	\begin{equation}\label{Hospital-condition}
	\lim_{x\to\infty}\frac{\psi'(x)}{\varphi'(x)}=C\in (0,\infty).
	\end{equation}
Then $\operatorname{d}_\psi(A)$ exists if and only if $\operatorname{d}_\varphi(A)$ exists. In this case, the two densities are equal.
\end{corollary}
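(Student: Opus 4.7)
The plan is to read the corollary off Theorem~\ref{Equiv} by specializing the two-sided bounds in \eqref{Hspital2} to the case where the $\liminf$ and $\limsup$ coincide. Under \eqref{Hospital-condition} the limit $\lim_{x\to\infty}\psi'(x)/\varphi'(x) = C$ exists in $(0,\infty)$, so the quantities $c_1$ and $c_2$ of \eqref{Hspital2} both equal $C$. The constant $c = c_2/c_1$ appearing in the conclusion of Theorem~\ref{Equiv} is therefore equal to $1$.

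Invoking the final (``in particular'') clause of Theorem~\ref{Equiv}, I then obtain $\underline{\operatorname{d}}_\psi(A) = \underline{\operatorname{d}}_\varphi(A)$ and $\overline{\operatorname{d}}_\psi(A) = \overline{\operatorname{d}}_\varphi(A)$. Since Definition~\ref{Def1} declares $\operatorname{d}_\psi(A)$ to exist precisely when $\underline{\operatorname{d}}_\psi(A) = \overline{\operatorname{d}}_\psi(A)$, and analogously for $\varphi$, these two equalities immediately give the equivalence of existence and the equality of the common values whenever both densities exist.

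The only subtle point is a bookkeeping one: Theorem~\ref{Equiv} assumes that both $\psi$ and $\varphi$ satisfy \eqref{asym}, whereas the corollary explicitly requires only $\psi, \varphi \in \mathcal{D}$. I would either read \eqref{asym} as a standing hypothesis inherited from the section, or add a short remark that \eqref{Hospital-condition} transfers \eqref{asym} between $\psi$ and $\varphi$: a Stolz--Ces\`aro argument yields $\sum_{k=1}^n \psi'(k) \sim C\sum_{k=1}^n \varphi'(k)$, and \cite[Theorem II]{Taylor} (already used inside the proof of Theorem~\ref{Equiv}) gives $\psi(n) \sim C\varphi(n)$, so \eqref{asym} for $\varphi$ forces \eqref{asym} for $\psi$. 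Beyond this cosmetic check, the corollary is a direct specialization and no real obstacle arises.
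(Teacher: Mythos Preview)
Your proposal is correct and matches the paper's approach exactly: the paper simply states that the corollary ``follows directly from Theorem~\ref{Equiv}'', and you have spelled out precisely how, by taking $c_1=c_2=C$ so that $c=1$ and invoking the ``in particular'' clause. Your observation about the missing hypothesis \eqref{asym} is a legitimate bookkeeping point that the paper itself glosses over; treating it as an implicit standing assumption from the surrounding section is the intended reading.
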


As mentioned above, the existence of the logarithmic density does not imply the existence of the asymptotic density. Clearly, the assumption \eqref{Hospital-condition} fails to hold if $\psi(x)=x$ and $\varphi(x)=\log x$ (or the other way around).

%%%%%%%%%%%%%%%%%%%%%%%%%%%%%%%%%%%%%%%%%
%%
%% Section 4 - psi-asymptotic density of strictly increasing sequences
%%
%%%%%%%%%%%%%%%%%%%%%%%%%%%%%%%%%%%%%%%%%

\section{Asymptotic $\psi$-density of strictly increasing sequences}\label{increasing-sequences-sec}

Let $(v_n)_n$ be a strictly increasing sequence of positive integers, and let $\mathcal{S}=\{v_n : n\in\N\}$ be the associated set. It is known \cite{NZM,P} that the lower and upper asymptotic density of $\mathcal{S}$ can be expressed as
	\begin{eqnarray}\label{seq}
	\underline{\operatorname{d}}(\mathcal{S})=\liminf_{n\to\infty}\frac{n}{v_n} \quad 	\text{and} \quad
	\overline{\operatorname{d}}	(\mathcal{S})=\limsup_{n\to\infty}\frac{n}{v_n}.
	\end{eqnarray}

Motivated by this, we ask the following question.

\begin{question}
What is the asymptotic $\psi$-densities analogue of \eqref{seq}?
\end{question}

The following result addresses this question when $\psi\in\mathcal{D}_1 \cup \D_2$, and is inspired by the proofs of \cite[Theorem 11.1]{NZM} and \cite[Theorem 2.3]{P}.

\begin{theorem}\label{lem1}
Let $(v_n)_n$ be a strictly increasing infinite sequence of positive integers. Suppose that one of the following conditions holds:
	\begin{itemize}
		\item[\textnormal{(a)}] $\psi\in \mathcal{D}_1$,
		\item[\textnormal{(b)}] $\psi\in \mathcal{D}_2$ and $\psi'(v_n)/\psi(n)\to0$, as $n\to \infty$.
	\end{itemize}
Then	
	\begin{align}\label{lim1}
	\underline{\operatorname{d}}_{\psi}(\mathcal{S})=\liminf_{n\to\infty}\frac{\mathcal{S}_{\psi}(v_n)}{\psi(v_n)} \quad \text{and} \quad \overline{\operatorname{d}}_{\psi}(\mathcal{S})=\limsup_{n\to\infty}\frac{\mathcal{S}_{\psi}(v_n)}{\psi(v_n)}.
	\end{align}
\end{theorem}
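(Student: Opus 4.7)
The plan is to exploit the fact that $\mathcal{S}_\psi(N)$ is a step function that only increases at $N=v_n$: for $v_n\le N<v_{n+1}$ we have $\mathcal{S}_\psi(N)=\mathcal{S}_\psi(v_n)$, and since $\psi$ is strictly increasing, $\psi(v_n)\le\psi(N)\le\psi(v_{n+1}-1)$. This yields the block-wise sandwich
$$
\frac{\mathcal{S}_\psi(v_n)}{\psi(v_{n+1}-1)}\ \le\ \frac{\mathcal{S}_\psi(N)}{\psi(N)}\ \le\ \frac{\mathcal{S}_\psi(v_n)}{\psi(v_n)}=:a_n,
$$
whose upper bound is attained at $N=v_n$. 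Consequently, the maximum of $\mathcal{S}_\psi(N)/\psi(N)$ on each block $[v_n,v_{n+1})\cap\mathbb{N}$ is $a_n$, and because $(a_n)$ is itself a subsequence of $(\mathcal{S}_\psi(N)/\psi(N))$, the \emph{limsup} identity $\overline{\operatorname{d}}_\psi(\mathcal{S})=\limsup_n a_n$ drops out at once, without any appeal to hypothesis (a) or (b).

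The \emph{liminf} identity is more delicate. From the sandwich, the minimum on each block is $\mathcal{S}_\psi(v_n)/\psi(v_{n+1}-1)$, so $\underline{\operatorname{d}}_\psi(\mathcal{S})=\liminf_n \mathcal{S}_\psi(v_n)/\psi(v_{n+1}-1)$. I would then rewrite $\mathcal{S}_\psi(v_n)=\mathcal{S}_\psi(v_{n+1})-\psi'(v_{n+1})=a_{n+1}\psi(v_{n+1})-\psi'(v_{n+1})$ to produce the key identity
$$
\frac{\mathcal{S}_\psi(v_n)}{\psi(v_{n+1}-1)} \;=\; a_{n+1}\cdot\frac{\psi(v_{n+1})}{\psi(v_{n+1}-1)} \;-\; \frac{\psi'(v_{n+1})}{\psi(v_{n+1}-1)}.
$$
Since $(a_{n+1})$ is uniformly bounded (Lemma~\ref{asymptotic-lemma} gives $\sum_{k=1}^N\psi'(k)\sim\psi(N)$, and $\mathcal{S}_\psi(v_{n+1})$ is majorized by this sum), it suffices to establish, under (a) or (b), the two tail relations
$$
\frac{\psi(v_{n+1})}{\psi(v_{n+1}-1)}\to 1 \quad\text{and}\quad \frac{\psi'(v_{n+1})}{\psi(v_{n+1}-1)}\to 0, \qquad n\to\infty.
$$
With these in hand the right-hand side of the key identity is $a_{n+1}+o(1)$, giving $\liminf_n \mathcal{S}_\psi(v_n)/\psi(v_{n+1}-1)=\liminf_n a_n$, which finishes the theorem.

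The main obstacle is the verification of the two tail relations, and this is where the hypotheses enter. Under (a), $\psi\in\D_1$ is concave, so $\psi'$ is decreasing and hence $\psi'(x)\le\psi'(1)$ for $x\ge 1$, immediately delivering the second relation; the first relation is \eqref{x+1-asymp-x}, noted in the paragraph preceding Lemma~\ref{tends-to-zero} to hold for all $\psi\in\D_1$. Under (b), Lemma~\ref{x-to-infty} supplies \eqref{x+1-asymp-x}, giving the first relation, and then $v_n\ge n$ (the $v_n$ being distinct positive integers) yields $\psi(n)\le\psi(v_n)$, so the hypothesis $\psi'(v_n)/\psi(n)\to 0$ forces $\psi'(v_{n+1})/\psi(v_{n+1})\to 0$; combining this with the first relation proves the second. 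No single step is intrinsically hard, but keeping the two cases parallel and shifting indices between $n$ and $n+1$ correctly is where the bookkeeping demands care.
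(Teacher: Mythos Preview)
Your proof is correct and shares the paper's core structure: both exploit that $\mathcal{S}_\psi$ is constant on the blocks $[v_n,v_{n+1})$, both obtain the limsup identity without invoking (a) or (b), and both reduce the liminf identity to controlling a single error term. Your liminf argument is organized a bit more cleanly: you identify the block minimum $\mathcal{S}_\psi(v_n)/\psi(v_{n+1}-1)$ directly and rewrite it as $a_{n+1}+o(1)$ via the two tail relations, whereas the paper introduces the auxiliary index $m(n)$ (the least $m$ with $v_m>n$), compares $\mathcal{S}_\psi(v_{m(n)})/\psi(v_{m(n)})$ to $\mathcal{S}_\psi(n)/\psi(n)$ with additive error $\psi'(v_{m(n)})/\psi(n)$, and in case (b) then needs a separate lemma (that $v_n>n$ eventually unless $v_n\equiv n$) to deduce $v_{m(n)}\le v_n$. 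Your route bypasses that lemma entirely, using only the elementary bound $v_n\ge n$ together with \eqref{x+1-asymp-x}; the paper's route has the minor compensating advantage that its error term matches hypothesis (b) immediately without first passing through $\psi'(v_{n+1})/\psi(v_{n+1})$.
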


Before we proceed with the proof, we need to establish an auxiliary result.

\begin{lemma}\label{lem2}
Let $(v_n)_n$ be a strictly increasing infinite sequence of positive integers. If $v_n\neq n$ for some $n$, then there exists an $N\in \mathbb{N}$ such that $v_n>n$ for all $n\geq N$.
\end{lemma}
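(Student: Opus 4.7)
The plan is to show that strict monotonicity of an integer-valued sequence forces $v_n \geq n$ universally, and that the first place where strict inequality occurs propagates forward.

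First, I would establish by induction the baseline estimate $v_n \geq n$ for every $n \in \N$. The base case $v_1 \geq 1$ is immediate since $v_1$ is a positive integer. For the inductive step, assuming $v_n \geq n$, strict monotonicity gives $v_{n+1} > v_n \geq n$; since $v_{n+1}$ is an integer, this forces $v_{n+1} \geq n+1$. So $v_n \geq n$ holds for every $n$.

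Next, invoke the hypothesis: there exists some index $m$ with $v_m \neq m$. Let $N$ be the smallest such index. Combined with $v_N \geq N$ and $v_N \neq N$, we get $v_N \geq N + 1$, i.e., $v_N > N$. I then run a second induction, starting from $N$, to show $v_n \geq n + 1$ for all $n \geq N$. Indeed, if $v_n \geq n + 1$, then $v_{n+1} > v_n \geq n + 1$ and the integer constraint yields $v_{n+1} \geq n + 2$, as desired.

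There is no real obstacle here — the argument is entirely elementary and uses only that the sequence consists of strictly increasing positive integers. The only subtlety worth flagging is the use of integrality: the step from $v_{n+1} > v_n \geq n$ to $v_{n+1} \geq n+1$ (and analogously in the second induction) fails for real-valued strictly increasing sequences, which is exactly why the statement is formulated for sequences of positive integers.
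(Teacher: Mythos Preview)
Your proof is correct. Both you and the paper rely on the universal bound $v_n \geq n$, but from there the arguments diverge. You proceed directly: locate the least index $N$ where equality fails and then push the strict inequality $v_n \geq n+1$ forward by induction. The paper instead argues by contradiction: if the conclusion failed there would be infinitely many indices $n_k$ with $v_{n_k} \leq n_k$, hence $v_{n_k} = n_k$; from each such point they propagate equality \emph{backward} to get $v_j = j$ for all $j \leq n_k$, and since the $n_k$ are unbounded this forces $v_n = n$ for every $n$, contradicting the hypothesis. Your forward induction is a shade cleaner and avoids the detour through contradiction; the paper's backward argument makes more visible the stronger fact that $v_n = n$ must hold on the entire initial segment $\{1,\ldots,N-1\}$ before the first deviation.
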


\begin{proof}
Suppose on the contrary to the assertion that there exists a strictly increasing infinite sequence of positive integers $(n_k)_k$ such that $v_{n_k}\leq n_k$.
Note that $v_n \geq n$ for all $n$, as $(v_n)_n$ is a strictly increasing infinite sequence of positive integers. Therefore,
$
v_{n_k}=n_k,
$
for all integers $k$.
Making use of this, we get
	$$
	n_k-1\leq v_{n_k-1}<n_k,
	$$
which implies that $v_{n_k-1}=n_k-1$. Inductively, we conclude that for any $n_k$, we have $v_j=j$ for all $j\leq n_k$. Since $(n_k)_k$ is strictly increasing, we find that $v_n=n$ for all $n$, which is a contradiction.
\end{proof}

\begin{proof}[Proof of Theorem~\ref{lem1}]
	Without loss of generality, we assume that $v_n\neq n$. Clearly, since $(v_n)_n$ is a strictly increasing sequence of positive integers,
	$$
	\left( \frac{\mathcal{S}_{\psi}(v_n)}{\psi(v_n)}\right)_n \, \text{is a sub-sequence of }\;
	\left( 	\frac{\mathcal{S}_{\psi}(n)}{\psi(n)}\right) _n.
	$$
	Hence 
	$$
	\liminf_{n\to\infty}\frac{\mathcal{S}_{\psi}(v_n)}{\psi(v_n)} \geq 	\liminf_{n\to\infty} 	\frac{\mathcal{S}_{\psi}(n)}{\psi(n)} \quad \text{and}\quad 	\limsup_{n\to\infty}\frac{\mathcal{S}_{\psi}(v_n)}{\psi(v_n)} \leq 	\limsup_{n\to\infty} 	\frac{\mathcal{S}_{\psi}(n)}{\psi(n)}.
	$$
	
 Regarding the first identity in \eqref{lim1}, it remains to prove that	
	$$
	\liminf_{n\to\infty}\frac{\mathcal{S}_{\psi}(v_n)}{\psi(v_n)} \leq 	\liminf_{n\to\infty} 	\frac{\mathcal{S}_{\psi}(n)}{\psi(n)}.
	$$
	To this end, let $(m(n))_n$ be a sequence of positive integers such that $v_{m(n)}$ is the smallest integer in $\mathcal{S}$ that exceeds $n$, that is, $v_{m(n)-1}\leq n < v_{m(n)}$. Note that $(m(n))_n$ is an increasing sequence (not necessarily strictly increasing) and tends to $\infty$ as $n\to\infty$. Then 
	$$
	\mathcal{S}_{\psi}(n)=\sum_{k=1}^n\psi'(k)\chi_\mathcal{S}(k)=\sum_{k=1}^{m(n)-1}\psi'(v_k)
	$$
	and
	\begin{align*}
	\frac{\mathcal{S}_{\psi}(v_{m(n)})}{\psi(v_{m(n)})}-\frac{\mathcal{S}_{\psi}(n)}{\psi(n)}=&\frac{\mathcal{S}_{\psi}(v_{m(n)})}{\psi(v_{m(n)})}-\frac{\sum_{k=1}^{m(n)-1}\psi'(v_k)}{\psi(n)} 
	\leq\frac{\mathcal{S}_{\psi}(v_{m(n)})}{\psi(n)}-\frac{\sum_{k=1}^{m(n)-1}\psi'(v_k)}{\psi(n)}\\
	=&\frac{\sum_{k=1}^{m(n)}\psi'(v_k)-\sum_{k=1}^{m(n)-1}\psi'(v_k)}{\psi(n)}
	= \frac{\psi'(v_{m(n)})}{\psi(n)}.
	\end{align*}
	It follows that
	\begin{eqnarray}\label{eq1}
	\frac{\mathcal{S}_{\psi}(v_{m(n)})}{\psi(v_{m(n)})} \leq \frac{\mathcal{S}_{\psi}(n)}{\psi(n)}+ \frac{\psi'(v_{m(n)})}{\psi(n)}.
	\end{eqnarray}
	Let $(k(n))_n$ be the strictly increasing sub-sequence of $(m(n))_n$ consisting of all distinct elements in the sequence $(m(n))_n$. We distinguish two cases.
	\begin{itemize}
		\item [(i)] If $\psi\in \D_1$, then the derivative $\psi'$ is non-increasing, and hence $\psi'$ is bounded from above. This, together with \eqref{eq1}, implies 
		$$
		\liminf_{n\to\infty}\frac{\mathcal{S}_{\psi}(v_n)}{\psi(v_n)} \leq \liminf_{n\to\infty}\frac{\mathcal{S}_{\psi}(v_{k(n)})}{\psi(v_{k(n)})}=\liminf_{n\to\infty}\frac{\mathcal{S}_{\psi}(v_{m(n)})}{\psi(v_{m(n)})} \leq\liminf_{n\to\infty} 	\frac{\mathcal{S}_{\psi}(n)}{\psi(n)},
		$$
		and the inequality
\begin{eqnarray}\label{lim4}
		\underline{\operatorname{d}}_{\psi} (\mathcal{S}) \geq	\liminf_{n\to\infty}\frac{\mathcal{S}_{\psi}(v_n)}{\psi(v_n)} 
\end{eqnarray}
		follows immediately. 
		\item [(ii)] If $\psi\in \D_2$, then, by using Lemma~\ref{lem2} and considering that $v_{m(n)}$ is the smallest integer that exceeds $n$, we deduce the existence of $N\in \mathbb{N}$ such that 
		$$\psi'(v_{m(n)})\leq \psi'(v_n), \quad \text{for all}\;n\geq N.$$ 
This, along with the assumption $\psi'(v_n)/\psi(n)\to0$ as $n\to \infty$, yields 
	$$
	\lim_{n\to\infty}\frac{\psi'(v_{m(n)})}{\psi(n)}=0.
	$$
Consequently, by applying a similar proof to that in Case (i), we obtain \eqref{lim4}.
	\end{itemize}

	Regarding the second identity in \eqref{lim1}, it remains to prove that
	$$
	\limsup_{n\to\infty}\frac{\mathcal{S}_{\psi}(v_n)}{\psi(v_n)} \geq 	\limsup_{n\to\infty} 	\frac{\mathcal{S}_{\psi}(n)}{\psi(n)}=\overline{\operatorname{d}}_{\psi}(\mathcal{S}).
	$$
Clearly, we may suppose that $\overline{\operatorname{d}}_{\psi}(\mathcal{S})>0$.
	By definition, for any given $\varepsilon\in (0,\overline{\operatorname{d}}_{\psi}(\mathcal{S}))$, there exists a strictly increasing sequence $(n_k)_k$ of positive integers such that 
	\begin{align}\label{prop}
	\frac{\mathcal{S}_{\psi}(n_k)}{\psi(n_k)}>\overline{\operatorname{d}}_{\psi}(\mathcal{S})-\varepsilon.
	\end{align}
	Let $n_{k_1}\geq v_1$ be an integer satisfying \eqref{prop}. Then there exists a positive integer $m_1$ such that $v_{m_1} \leq n_{k_1} <v_{m_{1}+1}$. Hence
	$$
	\mathcal{S}_{\psi}(n_{k_1})=\sum_{j=1}^{n_{k_1}}\psi'(j)\chi_{\mathcal{S}}(j)=\sum_{j=1}^{m_1}\psi'(v_j)=\mathcal{S}_{\psi}(v_{m_1}),
	$$
	and so,
	$$
	\frac{\mathcal{S}_{\psi}(v_{m_1})}{\psi(v_{m_1})}=\frac{	\mathcal{S}_{\psi}(n_{k_1})}{\psi(v_{m_1})} \geq \frac{	\mathcal{S}_{\psi}(n_{k_1})}{\psi(n_{k_1})}>\overline{\operatorname{d}}_{\psi}(\mathcal{S})-\varepsilon.
	$$
	Thus, there exists at least one integer $m_1$ such that
	\begin{eqnarray}\label{prop2}
	\frac{\mathcal{S}_{\psi}(v_{m_k})}{\psi(v_{m_k})}>\overline{\operatorname{d}}_{\psi}(\mathcal{S})-\varepsilon.
	\end{eqnarray}
	For any such $m_1$, there exists an integer $n_{k_2} \geq v_{m_{1}+1}$ satisfying \eqref{prop}. Similarly, there exists an integer $m_2$ such that $v_{m_2}\leq n_{k_2}<v_{m_2+1}$. Hence, $\mathcal{S}_{\psi}(n_{k_2})=\mathcal{S}_{\psi}(v_{m_2})$ and
	$$
	\frac{\mathcal{S}_{\psi}(v_{m_2})}{\psi(v_{m_2})}=\frac{	\mathcal{S}_{\psi}(n_{k_2})}{\psi(v_{m_2})} \geq \frac{	\mathcal{S}_{\psi}(n_{k_2})}{\psi(n_{k_2})}>\overline{\operatorname{d}}_{\psi}(\mathcal{S})-\varepsilon.
	$$
	Therefore, $m_2$ satisfies \eqref{prop2}. Note that since $n_{k_2} \geq v_{m_1+1}$, we have
	$$
	\mathcal{S}_{\psi}(v_{m_2})=\mathcal{S}_{\psi}(n_{k_2}) \geq \mathcal{S}_{\psi}(v_{m_1+1})>\mathcal{S}_{\psi}(v_{m_1}).
	$$	
	Thus, there exist an infinite strictly increasing sequence $(m_k)_k$ of integers that satisfies \eqref{prop2}, and consequently
	$$
	\limsup_{n\to\infty}\frac{\mathcal{S}_{\psi}(v_n)}{\psi(v_n)} \geq \overline{\operatorname{d}}_{\psi}(\mathcal{S}).
	$$
This completes the proof of Theorem~\ref{lem1}.
\end{proof}

\begin{remark}
From the proof of Theorem~\ref{lem1}, we observe that the limit superior in \eqref{lim1} remains valid for $\psi \in \mathcal{D}_2$ without the need for any additional conditions.
\end{remark}

\begin{example}
We illustrate the usefulness of Theorem~\ref{lem1} in counting asymptotic densities. Let $\mathcal{S}$ be the set of positive even integers. For $\psi(x)=\sqrt{x}\in \D_1$, we may use Lemma~\ref{asymptotic-lemma} (or Example~\ref{p-sum-ex}) to obtain
	\begin{eqnarray*}
	\mathcal{S}_{\psi}(2n)&=&\sum_{k=1}^{2n}\frac{\chi_{\mathcal{S}}(k)}{2\sqrt{k}}=\sum_{k=1}^{n}	\frac{1}{2\sqrt{2k}}=\frac{1}{\sqrt{2}}\sum_{k=1}^{n}	\psi'(k)
	\sim \frac{\psi(n)}{\sqrt{2}}= \frac{\sqrt{n}}{\sqrt{2}},\quad n\to \infty.
	\end{eqnarray*}
Therefore, by Theorem~\ref{lem1},
	$$
	\operatorname{d}_{\psi}(\mathcal{S})=\lim_{n\to\infty}\frac{\mathcal{S}_{\psi}(2n)}{\psi(2n)}=\frac{1}{2}.
	$$
Consider now $\psi(x)=x\log x \in \D_2$. Noting that 
	$$
	\frac{\psi'(2n)}{\psi(n)}=\frac{\log (2n)+1}{n\log n}\to 0,\quad n\to \infty,
	$$
we have
	$$
	\mathcal{S}_{\psi}(2n)=\sum_{k=1}^{2n}(1+\log k)\chi_{\mathcal{S}}(k)=\sum_{k=1}^{n}(1+\log (2k))=n(1+\log 2)+\log n!.
	$$
Therefore, by Stirling's formula and Theorem~\ref{lem1},
	$$
	\operatorname{d}_{\psi}(\mathcal{S})=\lim_{n\to\infty}\frac{\mathcal{S}_{\psi}(2n)}{\psi(2n)}=\lim_{n\to\infty}\frac{\log n!}{2n\log (2n)}=\frac{1}{2}.
	$$
\end{example}

\begin{remark}
    It seems that the condition $\psi'(v_n)/\psi(n)\to0$, as $n\to \infty$, is sufficient but not necessary for the conclusion of Theorem~\ref{lem1}. For example, consider the function $\psi(x)=e^{\sqrt{x}}\in \mathcal{D}_2$ satisfying
	$$
	\frac{\psi'(2n)}{\psi(n)}\to \infty,\quad n\to \infty.
	$$
If $\mathcal{S}$ denotes the set of positive even integers, it follows that
 	$$
	\mathcal{S}_{\psi}(2n+1)=\mathcal{S}_{\psi}(2n)=\sum_{k=1}^{2n}\frac{e^{\sqrt{k}}}{2\sqrt{k}}\chi_{\mathcal{S}}(k)=\sum_{k=1}^{n}\frac{e^{\sqrt{2k}}}{2\sqrt{2k}}\sim \frac{1}{2}e^{\sqrt{2n}},\quad \text{as } n\to \infty,
	$$
which yields
	$$
	\lim_{n\to\infty}\frac{\mathcal{S}_{\psi}(2n+1)}{\psi(2n+1)}=\frac{1}{2}\quad \text{and}\quad \lim_{n\to\infty}\frac{\mathcal{S}_{\psi}(2n)}{\psi(2n)}=\frac{1}{2}.
	$$
Thus $\operatorname{d}_{\psi}(\mathcal{S})=1/2$.
\end{remark}

%%%%%%%%%%%%%%%%%%%%%%%%%%%%%%
%%
%% Section 4 - Regularity
%%
%%%%%%%%%%%%%%%%%%%%%%%%%%%%%%

\section{Regularity of asymptotic $\psi$-densities}\label{S3}

In this section, we will use the notation $(u_n)_n$ to represent an arithmetic sequence of positive integers. For the sake of simplicity, these sequences are defined as $u_n = an + b$, where $a\in \mathbb{N}$ and $b$ is a non-negative integer. We denote $\mathcal{A}\subset \mathbb{N}$ as any subset with elements forming an arithmetic progression of positive integers, formally expressed as  $\mathcal{A} = \{u_n : n\in\N\}$.

  A function $\psi\in \D$ and the corresponding asymptotic $\psi$-density $\operatorname{d}_\psi$ are said to be \textit{regular} if $\operatorname{d}_{\psi}(\mathcal{A})=1/a$ for any fixed integers $a$ and $b$. This definition originates from a well-known fact \cite{P} that both the asymptotic density and the logarithmic density are regular.  We arrive at the following question.

\begin{question}\label{Con1}
What conditions on $\psi \in \mathcal{D}$ determine that $\psi$ is regular?
\end{question}

We will consider regularity in the classes $\D_1$ and $\D_2$.

\subsection{Regularity in $\D_1$ and $\D_2$} 

As the main result of this subsection, we prove that all functions in $\D_1$ are regular, while functions in $\D_2$ are regular under certain additional conditions.

\begin{theorem}\label{Reg}
Let $(u_n)_n$ be an infinite arithmetic sequence of positive integers. Suppose that one of the following conditions holds:
\begin{itemize}
	\item[\textnormal{(a)}] $\psi\in \mathcal{D}_1$,
	\item[\textnormal{(b)}] $\psi\in \mathcal{D}_2$ and $\psi'(u_n)/\psi(n)\to0$, as $n\to \infty$.
\end{itemize}
Then
	$
	\operatorname{d}_{\psi}(\mathcal{A})=1/a.
	$
\end{theorem}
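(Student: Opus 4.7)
The plan is to apply Theorem~\ref{lem1} to the strictly increasing sequence $v_n = u_n = an+b$, which reduces the problem to verifying
\begin{equation*}
\lim_{n\to\infty}\frac{\mathcal{A}_\psi(u_n)}{\psi(u_n)} \;=\; \lim_{n\to\infty}\frac{\sum_{k=1}^n \psi'(ak+b)}{\psi(an+b)} \;=\; \frac{1}{a}.
\end{equation*}
Under hypothesis (a), $\psi\in\D_1$ triggers Theorem~\ref{lem1}(a) directly; under hypothesis (b), the decay assumption $\psi'(u_n)/\psi(n)\to 0$ is exactly what Theorem~\ref{lem1}(b) requires. Hence, once the displayed limit is established, the conclusion $\operatorname{d}_\psi(\mathcal{A}) = 1/a$ follows immediately.

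To evaluate the limit, I would imitate the argument of Lemma~\ref{asymptotic-lemma} by applying the endpoint rules of integration to the positive function $f(x) = \psi'(ax+b)$, whose antiderivative is $\tfrac{1}{a}\psi(ax+b)$. If $\psi\in\D_1$, then $f$ is non-increasing and the comparison yields
\begin{equation*}
\tfrac{1}{a}\bigl[\psi(a(n{+}1)+b)-\psi(a+b)\bigr] \;\le\; \sum_{k=1}^n \psi'(ak+b) \;\le\; \psi'(a+b) + \tfrac{1}{a}\bigl[\psi(an+b)-\psi(a+b)\bigr].
\end{equation*}
If $\psi\in\D_2$, then $f$ is non-decreasing and the two outer bounds are interchanged (with the boundary constants adjusted accordingly). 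In either case, dividing through by $\psi(u_n) = \psi(an+b)$ and letting $n\to\infty$ produces $1/a$ on one side immediately and $\tfrac{1}{a}\cdot\tfrac{\psi(an+a+b)}{\psi(an+b)}$ on the other.

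The remaining step is therefore to show $\psi(x+a)\sim \psi(x)$ as $x\to\infty$, which follows by iterating \eqref{x+1-asymp-x} finitely many times. This asymptotic is available for $\psi\in\D_1$ by the comment just below Lemma~\ref{asymptotic-lemma} (one replaces $n$ by $x$ in its proof), and for $\psi\in\D_2$ by Lemma~\ref{x-to-infty}. The main obstacle is essentially bookkeeping: one must confirm that the monotonicity of $\psi'$ gives the integral comparison the correct orientation in each of the two classes, and that the linear change of variables $x\mapsto ax+b$ in $\psi'$ does not disturb the estimates. Since everything reduces to tools already developed in Section~\ref{S2} combined with Theorem~\ref{lem1}, no genuinely new ideas are needed.
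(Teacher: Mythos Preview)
Your proposal is correct and follows essentially the same route as the paper: reduce via Theorem~\ref{lem1} to the subsequence limit $\mathcal{A}_\psi(u_n)/\psi(u_n)\to 1/a$, then sandwich $\sum_{k=1}^n\psi'(u_k)$ against $\tfrac{1}{a}\psi(u_n)$ using the monotonicity of $\psi'$, and close with the asymptotic \eqref{x+1-asymp-x}. The only cosmetic difference is that the paper obtains the sandwich by telescoping $\psi(u_n)-\psi(u_1)=\sum_k(\psi(u_k)-\psi(u_{k-1}))$ and applying the mean value theorem on each gap $[u_{k-1},u_k]$, whereas you apply the integral endpoint rules of Lemma~\ref{asymptotic-lemma} directly to $f(x)=\psi'(ax+b)$; these are two phrasings of the same estimate, and the remaining ingredients (Lemma~\ref{x-to-infty} for $\D_2$, iteration of \eqref{x+1-asymp-x} to get $\psi(x+a)\sim\psi(x)$) coincide.
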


\begin{proof}
From Theorem~\ref{lem1}, it suffices to show that 
	\begin{eqnarray}\label{lim3}
		\lim_{n\to\infty}\frac{\mathcal{A}_{\psi}(u_n)}{\psi(u_n)}=\frac{1}{a},
	\end{eqnarray}
	where $u_n=an+b$ and 
	$$
	\mathcal{A}_{\psi}(u_n)=\sum_{k=1}^{u_n}\psi'(k)\chi_\mathcal{A}(k)=\sum_{k=1}^{n}\psi'(u_k).
	$$
By the mean value theorem, 
	\begin{align}
	\psi(u_n)=&\big(\psi(u_n)-\psi(u_{n-1})\big)+\big(\psi(u_{n-1})-\psi(u_{n-2})\big)+\cdots+\big(\psi(u_2)-\psi(u_1)\big)+\psi(u_1)\nonumber\\
	=&a\big( \psi'(c_{n,n-1})+\psi'(c_{n-1,n-2})+\cdots+\psi'(c_{2,1})\big) +\psi(u_1),\nonumber
	\end{align}
where $c_{k,k-1}\in (u_{k-1},u_k)$ for $k=2,\ldots,n$.

\begin{itemize}
	\item[(a)] If $\psi \in \D_1$, then the derivative $\psi'$ is decreasing and
	$$
	a\sum_{k=2}^{n}\psi'(u_k)+\psi(u_1)\leq \psi(u_n)\leq a\sum_{k=1}^{n-1}\psi'(u_k)+\psi(u_1),
	$$
that is,
	\begin{equation}\label{left-center-right}
	a	\frac{\mathcal{A}_{\psi}(u_n)}{\psi(u_n)}+\frac{\psi(u_1)-a\psi'(u_1)}{\psi(u_n)}\leq 1\leq a	\frac{\mathcal{A}_{\psi}(u_n)}{\psi(u_n)}-\frac{a\psi'(u_n)-\psi(u_1)}{\psi(u_n)}.
	\end{equation}
By the mean value theorem,
	$$
	0\leq \psi'(x)\leq \psi(x)-\psi(x-1),\quad x>1,
	$$
and hence
	$$
	0\leq \frac{\psi'(x)}{\psi(x)}\cdot \frac{\psi(x)}{\psi(x-1)}\leq \frac{\psi(x)}{\psi(x-1)}-1,\quad x>1.
	$$
Using \eqref{x+1-asymp-x} and the squeeze theorem, we find that $\frac{\psi'(x)}{\psi(x)}\to 0$ as $x\to\infty$.
Taking limit inferior on the second inequality in \eqref{left-center-right} and limit superior on the first inequality in \eqref{left-center-right} gives us
	$$
	\frac{1}{a}        \leq \underline{\operatorname{d}}_{\psi}(\mathcal{A})\leq\overline{\operatorname{d}}_{\psi}(\mathcal{A})\leq \frac{1}{a},
	$$
from which \eqref{lim3} follows immediately.

\item[(b)] If $\psi\in \D_2$, then $\psi'$ is increasing and
	$$
a	\mathcal{A}_{\psi}(u_n)-a\psi'(u_n)+\psi(u_1)\leq \psi(u_n)\leq 	a	\mathcal{A}_{\psi}(u_n)+\psi(u_1)-a\psi'(u_1).
$$
By making use of Theorem~\ref{lem1} and Lemma~\ref{tends-to-zero}, and deducing similarly as in Case (a), we obtain \eqref{lim3}.
\end{itemize}
This completes the proof.
\end{proof}

\subsection{More sufficient conditions for regularity in $\D_2$}\label{regularity-D2} 

As one may notice, for the condition 
\begin{eqnarray}\label{eq3}
	\frac{\psi'(u_{n})}{\psi(n)} = \frac{\psi'(u_{n})}{\psi(u_{n})} \cdot \frac{\psi(u_{n})}{\psi(n)} \to 0, \quad n \to \infty,
\end{eqnarray}
 in Theorem~\ref{Reg} to hold for $\psi \in \mathcal{D}_2$,  it suffices that $\frac{\psi(u_n)}{\psi(n)}$ is bounded, because we have $\psi'(u_n)/\psi(u_n) \to 0$ by Lemma~\ref{tends-to-zero}.  The following example shows that the condition \eqref{eq3} may not always hold.

\begin{example}\label{test-funs-ex}
	The test function $\psi(x)=x^p$, $p\geq 1$, does satisfy \eqref{eq3}, but the function $\psi(x)=\exp\big(x^\sigma\big)$, $\sigma\in (0,1)$, does not. Indeed,
	$$
	u_n^\sigma-n^\sigma=\sigma\int_n^{u_n}t^{\sigma-1}\, dt\geq \sigma u_n^{\sigma-1}(u_n-n),
	$$
and hence
	$$
	\frac{\psi'(u_n)}{\psi(n)}=\sigma u_n^{\sigma-1}\exp\big(u_n^\sigma-n^\sigma\big)\to\infty,\quad n\to\infty,
	$$
whenever $a\geq 2$ in $u_n=an+b$. 
\end{example}

From Example~\ref{test-funs-ex}, one may wonder if we have to restrict ourselves to functions $\psi$ of finite order in the sense of
	\begin{equation}\label{finite-order}
\limsup_{x\to\infty}\frac{\log\psi(x)}{\log x}<\infty.
	\end{equation}
However, \eqref{finite-order} alone is not sufficient to guarantee that $\psi(u_n)/\psi(n)$ is bounded. Indeed, in Lemma~\ref{not-ASV-lemma} below we will construct a finite order function $\psi$ for which $\psi(u_n)/\psi(n)$ is unbounded and for which $\psi(x+1)\not\sim \psi(x)$. Recall from Lemma~\ref{x-to-infty} that $\psi(x+1)\sim \psi(x)$ for all $\psi\in\D_2$.

The discussion above leads to the following question.

\begin{question}\label{Q}
What conditions on $\psi$ ensure the boundedness of  $\psi(an+b)/\psi(n)$?
\end{question}

%{\color{red}SHALL WE CONSIDER NOW THE QUESTION UNDER WHAT CONDITIONS THE RATIO $\psi'(an+b)/\psi(n)$ TENDS TO ZERO AT INFINITY?}

To explore this further, let us verify that $\psi$ needs to be of finite order. Interestingly, it appears that the constant $b$ is irrelevant in this context.

\begin{lemma}\label{finite-order-lemma}
Let $\psi:(0,\infty)\to (0,\infty)$ be a strictly increasing function satisfying
	$$
	\frac{\psi(ax)}{\psi(x)}\leq C<\infty
	$$
for some integer $a\geq 2$. Then the order of $\psi$ is at most $\log C/\log a$. 
\end{lemma}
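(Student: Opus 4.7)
The plan is to iterate the given inequality and then sandwich an arbitrary large $x$ between two consecutive powers of $a$. Since $\psi$ is strictly increasing and $a \geq 2$, we have $\psi(ax) > \psi(x)$, so automatically $C > 1$ and $\log C > 0$; this will make the final estimate meaningful.

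First, I would show by induction on $k \geq 1$ that
$$\psi(a^k x) \leq C^k \psi(x), \qquad x > 0.$$
The base case $k=1$ is the hypothesis, and the inductive step follows by applying the hypothesis to $a^{k-1}x$ in place of $x$. In particular, taking $x=1$, we obtain $\psi(a^k) \leq C^k \psi(1)$ for every $k \geq 0$.

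Next, I would exploit monotonicity. Given any $x \geq 1$, let $k = \lfloor \log x / \log a \rfloor$, so that $a^k \leq x < a^{k+1}$. Then
$$\psi(x) \leq \psi(a^{k+1}) \leq C^{k+1}\psi(1).$$
Taking logarithms yields
$$\log \psi(x) \leq (k+1)\log C + \log \psi(1) \leq \Bigl(\tfrac{\log x}{\log a} + 1\Bigr)\log C + \log \psi(1).$$

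Finally, dividing by $\log x$ and letting $x \to \infty$ gives
$$\limsup_{x\to\infty}\frac{\log \psi(x)}{\log x} \leq \frac{\log C}{\log a},$$
which is precisely the assertion about the order of $\psi$ in the sense of \eqref{finite-order}. There is no serious obstacle here: the argument is a standard doubling/iteration trick, and the constant $b$ from the original motivation $u_n = an+b$ plays no role whatsoever, as foreshadowed in the paragraph preceding the lemma.
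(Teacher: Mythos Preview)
Your proof is correct and follows essentially the same approach as the paper: iterate the hypothesis to obtain a bound of the form $\psi(\cdot)\leq C^{k}\cdot\text{const}$, sandwich an arbitrary large $x$ between consecutive powers $a^{k}$ and $a^{k+1}$, and invoke monotonicity to pass from $x$ to the nearest power of $a$. The paper iterates downward (dividing by $a$) and lands on the constant $\psi(a)$, while you iterate upward from $x=1$ and land on $\psi(1)$; this is a purely cosmetic difference.
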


\begin{proof}
	Let $x\geq a$ be large. Then there exists an $N\in\N$ such that $a^N\leq x\leq a^{N+1}$, that is, 	$$
	N\leq \frac{\log x}{\log a}\leq N+1
	\quad\textnormal{and}\quad
	1\leq \frac{x}{a^N}\leq a.
	$$ 
	Deducing inductively,
	\begin{eqnarray*}
		\psi(ax)&\leq& C\psi(x)\leq C^2\psi\left(\frac{x}{a}\right)\leq \cdots\leq C^{N+1}\psi\left(\frac{x}{a^N}\right)\\
		&\leq& C\psi(a)\exp\big(N\log C\big)\leq C\psi(a)x^{\frac{\log C}{\log a}}.
	\end{eqnarray*}
	Thus $\psi(x)=O\left(x^{\frac{\log C}{\log a}}\right)$, from which the assertion follows.
\end{proof}

\begin{remark}
Functions $\psi\in\D_2$ are not in general of finite order, but they do have a growth restriction $\log\psi(x)=o(x)$. To see this, we first recall from Lemma~\ref{x-to-infty} that $\psi(x+1) \sim \psi(x)$ as $x\to\infty$. The assertion then follows by Lemma~\ref{ASV-implies-growth} below.
\end{remark}

Based on Example~\ref{test-funs-ex} and Lemma~\ref{finite-order-lemma}, a natural question is, \textit{how strong the asymptotic equation $\psi(x+1)\sim \psi(x)$ is for a function $\psi\in \mathcal{D}_2$?} Let us consider the test function $\psi(x)=x^p$, $p\geq 1$, for which
	$$
	\frac{\psi(x+1)}{\psi(x)}=\left(1+\frac{1}{x}\right)^p.
	$$
Consequently,
	\begin{equation}\label{condition1}
	\limsup_{x\to\infty}\frac{\log\frac{\psi(x+1)}{\psi(x)}}{\log\left(1+1/x\right)}<\infty.
	\end{equation}	 
This condition can be simplified by using L'Hospital's rule, which gives us
	$$
	\lim_{x\to\infty}\frac{\log\left(1+1/x\right)}{1/x}
	=\lim_{y\to 0^+}\frac{\log(1+y)}{y}=1.
	$$
Thus \eqref{condition1} is equivalent to
	\begin{equation}\label{condition2}
	\limsup_{x\to\infty}\left(x\log\frac{\psi(x+1)}{\psi(x)}\right)<\infty.
	\end{equation}
This condition is satisfied by $\psi(x)=x^p$, but in general it illustrates the strength of the  asymptotic equation $\psi(x+1)\sim \psi(x)$. We are mostly interested in unbounded functions $\psi$, but an example of a bounded function $\psi$ satisfying both $\psi(x+1)\sim \psi(x)$ and \eqref{condition2} is given by $\psi(x)=\arctan(x)$.

Condition \eqref{condition2} gives an answer to Question~\ref{Q}, as stated in Theorem~\ref{ratio-thm} below. It is worth noting that neither convexity nor differentiability is necessary for this result.

\begin{theorem}\label{ratio-thm}
Let $\psi:(0,\infty)\to (1,\infty)$ be a function. Then the following assertions hold.
	\begin{itemize}
		\item[\textnormal{(a)}] If \eqref{condition2} holds, then \eqref{finite-order} holds and the ratio $\frac{\psi(an+b)}{\psi(n)}$ is a bounded function of $n$ for any fixed constants $a,b\in \mathbb{N}$.
		\item[\textnormal{(b)}] If $\displaystyle\liminf_{x\to\infty}\left(\frac{x}{\log x}\log\frac{\psi(x+1)}{\psi(x)}\right)>0$, then $\displaystyle\liminf_{x\to\infty}\frac{\log\psi(x)}{\log x}=\infty$ and the ratio $\frac{\psi(ax)}{\psi(x)}$ is an unbounded function of $x$ for any fixed constant $a>1$.
	\end{itemize}
\end{theorem}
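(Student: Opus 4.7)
The plan is to treat both parts by turning the hypothesized bound on $\log\bigl(\psi(x+1)/\psi(x)\bigr)$ into a telescoping estimate along unit steps, then comparing the resulting sum to a harmonic-type integral.

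For part (a), fix $\varepsilon>0$ and choose $x_0$ so that $\log\bigl(\psi(x+1)/\psi(x)\bigr)\le (M+\varepsilon)/x$ for all $x\ge x_0$, where $M$ denotes the finite $\limsup$. Telescoping between integers $m\le n$ gives
$$\log\frac{\psi(n)}{\psi(m)}=\sum_{k=m}^{n-1}\log\frac{\psi(k+1)}{\psi(k)}\le (M+\varepsilon)\sum_{k=m}^{n-1}\frac{1}{k}\le (M+\varepsilon)\log\frac{n}{m}+O(1).$$
Taking $m=n_0$ fixed yields $\log\psi(n)\le (M+\varepsilon)\log n+O(1)$, which is finite order along the integers; a real $x=n+\eta$ with $\eta\in[0,1)$ is handled by telescoping from a base point in $[x_0,x_0+1]$, on which $\psi$ is (tacitly) assumed bounded. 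Taking instead $m=n$ and summing up to $an+b-1$ gives $\log[\psi(an+b)/\psi(n)]\le (M+\varepsilon)\log\bigl((an+b)/n\bigr)+O(1)$, whose limit is $(M+\varepsilon)\log a$, so the ratio is bounded in $n$.

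For part (b), fix $\varepsilon\in(0,\alpha)$ and choose $x_1$ so that $\log\bigl(\psi(x+1)/\psi(x)\bigr)\ge (\alpha-\varepsilon)\log x/x$ for $x\ge x_1$. The relevant comparison is now with $\sum_{k\le n}\log k/k\sim \tfrac{1}{2}(\log n)^2$. Telescoping as in part (a), starting from a base point in $[x_1,x_1+1)$ along unit steps and using $\log\psi\ge 0$ (since $\psi>1$), gives
$$\log\psi(x)\ge (\alpha-\varepsilon)\sum_{j=0}^{\lfloor x-x_1\rfloor-1}\frac{\log(x_1+j)}{x_1+j}+O(1)\sim \frac{\alpha-\varepsilon}{2}(\log x)^2,$$
whence $\log\psi(x)/\log x\to\infty$. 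For the unboundedness of $\psi(ax)/\psi(x)$, the key observation is to choose the special sequence $x_N=N/(a-1)$, so that $ax_N-x_N=N$ is a positive integer and the ratio can be reached by integer unit steps from $x_N$:
$$\log\frac{\psi(ax_N)}{\psi(x_N)}\ge (\alpha-\varepsilon)\sum_{j=0}^{N-1}\frac{\log(x_N+j)}{x_N+j}\sim \frac{\alpha-\varepsilon}{2}\bigl[(\log ax_N)^2-(\log x_N)^2\bigr]\sim (\alpha-\varepsilon)\log a\cdot\log x_N,$$
which tends to $\infty$ since $x_N\to\infty$.

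The main technical obstacle is that $\psi$ is not assumed monotone or continuous, so values at non-integer arguments cannot be compared to neighboring integer values a priori. This is exactly why in part (b) the special sequence $x_N=N/(a-1)$ is chosen, guaranteeing that $x_N$ and $ax_N$ differ by an integer and making the telescoping entirely internal to the stated hypothesis; in part (a) the same issue forces the minor assumption that $\psi$ is bounded on an initial unit interval, which is automatic for any reasonable function arising in applications. Apart from this, the argument is a careful comparison of the two harmonic-type sums $\sum 1/k$ and $\sum \log k/k$ with the integrals $\log t$ and $\tfrac{1}{2}(\log t)^2$, respectively.
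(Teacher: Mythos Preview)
Your telescoping approach is correct and is the same engine the paper uses. The differences worth highlighting are in how you and the paper close the two \emph{ratio} claims.

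For the boundedness of $\psi(an+b)/\psi(n)$ in (a), you telescope directly from $n$ to $an+b$ along integer steps, which is self-contained. The paper instead first bounds $\psi(an+b)\le\psi\bigl(c(n+1)\bigr)$ with $c=\max\{a,b\}$ and then telescopes from $n$ to $c(n+1)$; that preliminary step tacitly uses monotonicity of $\psi$, which the theorem does not assume. Your route avoids this.

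For the unboundedness of $\psi(ax)/\psi(x)$ in (b), the paper does \emph{not} argue directly: having established infinite order, it invokes the contrapositive of Lemma~\ref{finite-order-lemma} (a bounded ratio would force finite order). That lemma, however, is stated for strictly increasing $\psi$ and integer $a\ge 2$, so the paper's route again leans on monotonicity and needs a small extension to cover real $a>1$. Your direct argument via the special sequence $x_N=N/(a-1)$, chosen so that $ax_N-x_N\in\mathbb{N}$ and the telescoping is purely internal to the hypothesis, sidesteps both issues and is a genuinely cleaner way to finish (b).

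On the finite-order claim in (a), you and the paper proceed identically: telescope from a base point with the same fractional part and compare with $\sum 1/k$. Your acknowledged ``tacit'' local boundedness of $\psi$ on a unit interval is the same concession the paper makes when it writes ``we may suppose, without loss of generality, that \eqref{consequence} is valid for all $x>0$'' and then bounds $\psi(x-N)\le\psi(2)$; neither version is fully rigorous without some mild regularity, but both are standard in context. One cosmetic point: in your displayed lower bound in (b) you write the summand as $\log(x_1+j)/(x_1+j)$ rather than $\log(y_0+j)/(y_0+j)$ for the actual base point $y_0\in[x_1,x_1+1)$; since the asymptotic $\sim\tfrac12(\log x)^2$ is insensitive to this shift, the argument is unaffected.
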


\begin{proof}
	(a) Note that both assertions are clearly true if $\psi$ is bounded. Hence we may assume that $\psi$ is unbounded. Let $c=\max\{a,b\}$. By \eqref{condition2}, there exist constants $K>0$ and $R>0$ such that
	\begin{equation}\label{consequence}
	\frac{\psi(x+1)}{\psi(x)}\leq \exp\left(\frac{K}{x}\right),\quad x>R.
	\end{equation}
Then
	\begin{eqnarray*}
		\frac{\psi(an+b)}{\psi(n)}&\leq& \frac{\psi(c(n+1))}{\psi(n)}= \frac{\psi(c(n+1))}{\psi(c(n+1)-1)}\cdot \frac{\psi(c(n+1)-1)}{\psi(c(n+1)-2)}\cdots \frac{\psi(n+1)}{\psi(n)}\\
		&\leq & \exp\left(K\sum_{k=n}^{c(n+1)-1}\frac1k\right)\leq\exp\left(\frac{Kc(n+1)}{n}\right)\leq e^{2Kc},\quad n>R,
	\end{eqnarray*}
which proves the second assertion in (a).
We now proceed to prove the first assertion in (a). To simplify the reasoning, we may suppose, without loss of generality, that the consequence \eqref{consequence} of \eqref{condition2} is valid for all $x>0$. Let $x\geq 2$ be large. Then there exists an $N\in\N$ such that $N+1\leq x\leq N+2$. Deducing inductively,
	\begin{eqnarray*}
		\psi(x+1)&\leq& \psi(x)\exp\left(\frac{K}{x}\right) \leq \psi(x-1)\exp\left(\frac{K}{x-1}+\frac{K}{x}\right)\leq\cdots\\
		&\leq & \psi(x-N)\exp\left(K\sum_{j=0}^{N}\frac{1}{x-j}\right)\leq \psi(2)\exp\left(K\sum_{j=1}^{N+1}\frac{1}{x-(j-1)}\right)\\
		&\leq& \psi(2)\exp\left(K\sum_{j=1}^{N+1}\frac{1}{j}\right)\leq \psi(2)\exp\big(O(\log x)\big),
	\end{eqnarray*}
	which implies the first assertion in (a). 
	
	(b) By the assumption, there exist constants $\delta>0$ and $R>0$ such that
	$$
	\frac{\psi(x+1)}{\psi(x)}\geq \exp\left(\frac{\delta\log x}{x}\right),\quad x>R.
	$$ 
	Recall that the function $x\mapsto\frac{\log x}{x}$ is decreasing for $x\geq e$. Hence, deducing similarly as in (a), but for a large $x$ satisfying $N+3\leq x\leq N+4$, we obtain
	$$
	\psi(x+1)\geq \psi(3)\exp\left(O\left(\log^2 x\right)\right),
	$$
proving the first assertion in (b). The second assertion in (b) follows by Lemma~\ref{finite-order-lemma} because $\psi$ is of infinite order.
\end{proof}

\begin{remark}
The assumption in Theorem~\ref{ratio-thm}(b) can be replaced with a weaker assumption
	$$
	\liminf_{x\to\infty}\left(\frac{x}{h(x)}\log\frac{\psi(x+1)}{\psi(x)}\right)>0,
	$$
where $h$ is a continuous function tending to infinity arbitrarily slowly such that $\frac{h(x)}{x}$ is decreasing. Indeed, we obtain, for some $\delta>0$ and for $N+1\leq x\leq N+2$,
	\begin{eqnarray*}
	\psi(x+1) &\geq& \psi(x)\exp\left(\frac{\delta h(x)}{x}\right)\geq \psi(x-1)\exp\left(\frac{\delta h(x)}{x}+\frac{\delta h(x-1)}{x-1}\right)\\
	&\geq& \ldots\geq \psi(x-N)\exp\left(\delta\sum_{j=0}^N\frac{h(x-j)}{x-j}\right).
	\end{eqnarray*}
Consequently, using the fact that $\frac{h(x)}{x}$ is decreasing,
	$$
	\log \psi(x+1)\geq O(1)+\delta\int_{\alpha}^{x+1}\frac{h(t)}{t}\, dt,
	$$
where $\alpha>0$ is a number independent of $x$. By L'Hospital's rule,
	$$
	\frac{\int_{\alpha}^{x+1}\frac{h(t)}{t}\, dt}{\log (x+1)}\sim h(x+1)\to\infty,\quad x\to\infty,
	$$
and we conclude that $\psi$ is of infinite order. The fact that $\frac{\psi(ax)}{\psi(x)}$ is an unbounded function of $x$ follows again by Lemma~\ref{finite-order-lemma}.
\end{remark}

\appendix

%%%%%%%%%%%%%%%%%%%%%%%%%%%%%%%%%%%%%%
%%
%% Appendix A
%%
%%%%%%%%%%%%%%%%%%%%%%%%%%%%%%%%%%%%%%

\section{On the identity $\psi(n+1)\sim \psi(n)$}\label{App}

In this appendix, we explore the possibility of convex functions $\psi$ to satisfy the crucial identity  \eqref{x+1-asymp-x}. We will show that a convex function $\psi$ of finite order, as a restriction, does not always satisfy \eqref{x+1-asymp-x}. Recall \eqref{finite-order} for the definition of order. This appendix can be of independent interest to readers interested in the  properties of convex functions.
 
Before we proceeding, let us provide an illustrative example of convex functions that do satisfy \eqref{x+1-asymp-x}.

\begin{example}
Functions $\psi(x)=x^p$, $p\geq 1$, satisfy \eqref{x+1-asymp-x}. This follows from
	$$
	px^{p-1}\leq (x+1)^p-x^p=p\int_x^{x+1}t^{p-1}\, dt\leq p(x+1)^{p-1}.
	$$
	On the other hand, if $p\in (0,1)$, then
	$$
	p(x+1)^{p-1}\leq (x+1)^p-x^p=p\int_x^{x+1}t^{p-1}\, dt\leq px^{p-1},
	$$
	and we see that $\psi(x)=\exp\big(x^p\big)$ satisfies \eqref{x+1-asymp-x}.
\end{example}	

A strictly increasing and measurable (or continuous) function $\psi:(r_0,\infty)\to (1,\infty)$ is called \emph{additively slowly varying} (or translational slowly varying)  \cite{Taskovic}, if
	\begin{equation}\label{ASV}
	\psi(x+\lambda)\sim\psi(x),\quad x\to\infty,
	\end{equation}
for all $\lambda>0$. This is a slightly stronger condition than \eqref{x+1-asymp-x}. Next we prove that \eqref{ASV} (as well as \eqref{x+1-asymp-x}) implies a growth restriction for $\psi$, keeping in mind that $\psi(x)=e^x$ does not satisfy either of \eqref{x+1-asymp-x} or \eqref{ASV}.

\begin{lemma}\label{ASV-implies-growth}
If $\psi:(r_0,\infty)\to (1,\infty)$ is additively slowly varying, then 
	\begin{equation}\label{hyper-order}
	\log\psi(x)=o(x).
	\end{equation}
\end{lemma}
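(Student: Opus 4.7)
The plan is to reduce \eqref{hyper-order} to a Cesàro/telescoping statement for $\log \psi$ along the integers, and then to lift the conclusion back to the real variable using monotonicity.

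First I would apply the hypothesis \eqref{ASV} with the single value $\lambda = 1$: this gives $\psi(n+1)/\psi(n) \to 1$, hence $b_n := \log \psi(n+1) - \log \psi(n) \to 0$ as $n \to \infty$ (the quantities $\log \psi(n)$ are well-defined and positive since $\psi > 1$). Writing the telescoping sum
\[
\log \psi(n) = \log \psi(N) + \sum_{k=N}^{n-1} b_k,
\]
and choosing $N$ so that $|b_k| < \varepsilon$ for all $k \geq N$, one obtains $|\log \psi(n) - \log \psi(N)| \leq (n-N)\varepsilon$, and therefore $\log \psi(n)/n < 2\varepsilon$ for all sufficiently large $n$. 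Since $\varepsilon > 0$ is arbitrary, $\log \psi(n)/n \to 0$ along the natural numbers.

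To pass from integers to the continuous variable $x$, I would invoke the fact that $\psi$ is strictly increasing: for every $x \geq r_0$ we have $\log \psi(x) \leq \log \psi(\lceil x \rceil)$, so
\[
\frac{\log \psi(x)}{x} \leq \frac{\log \psi(\lceil x \rceil)}{\lceil x \rceil} \cdot \frac{\lceil x \rceil}{x} \longrightarrow 0, \qquad x \to \infty,
\]
which is exactly \eqref{hyper-order}.

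I do not expect any serious obstacle here: the only nontrivial observation is that the hypothesis is needed merely for the \emph{single} shift $\lambda = 1$, and from that point on the proof is a standard averaging argument. The role of the lower bound $\psi > 1$ (rather than $\psi > 0$) is purely cosmetic, ensuring that $\log \psi(x)$ has a constant sign for large $x$ so that the telescoping estimates go through without absolute-value subtleties.
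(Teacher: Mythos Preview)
Your proof is correct and follows essentially the same telescoping/iteration idea as the paper's: both bound $\log\psi$ by summing the small increments $\log\psi(x+\lambda)-\log\psi(x)$ and then let the $\varepsilon$ go to zero. The only cosmetic difference is that the paper iterates directly along the real variable (with a general step $\lambda$), whereas you first telescope along the integers with $\lambda=1$ and then lift to real $x$ via monotonicity and $\lceil x\rceil$; your remark that only a single shift $\lambda=1$ is needed is consistent with the paper's argument, which in the end also lets $\varepsilon\to 0^+$ for a fixed $\lambda$.
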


\begin{proof}
Let $\varepsilon>0$ and $\lambda>0$. Then there exists a constant $R>r_0$ such that
	$$
	\psi(x+\lambda)\leq (1+\varepsilon)\psi(x),\quad x\geq R.
	$$
Let $x\geq R+\lambda$. Then there exists an $N\in\N$ such that $R+N\lambda\leq x\leq R+(N+1)\lambda$. Deducing inductively, we obtain
	$$
	\psi(x+\lambda)\leq (1+\varepsilon)^{N+1}\psi(x-N\lambda)\leq (1+\varepsilon)^{N+1}\psi(R+\lambda).
	$$
Since $N\leq \frac{x-R}{\lambda}$, we obtain
	$$
	\log\psi(x)\leq \frac{\log(1+\varepsilon)}{\lambda}x+O(1),\quad x\geq  R+2\lambda.
	$$
Consequently, $\limsup\limits_{x\to\infty}\frac{\log\psi(x)}{x}\leq \frac{\log(1+\varepsilon)}{\lambda}$, where we may let $\varepsilon\to 0^+$. 
\end{proof}

Conversely to Lemma~\ref{ASV-implies-growth} (keeping in mind that we are currently interested in convex functions), we may ask if an increasing, continuous and convex function  $\psi:(r_0,\infty)\to (1,\infty)$ satisfying a growth restriction would always satisfy \eqref{x+1-asymp-x}. Apparently, this is not the case.

\begin{lemma}\label{not-ASV-lemma}
	For every $p>1$ there exist a strictly increasing and unbounded sequence $(x_n)_n$ of positive real numbers and a strictly increasing, continuous and convex function $\psi:[x_1,\infty)\to (1,\infty)$ that is linear on each interval $[x_n,x_{n+1}]$ and satisfies
	\begin{equation}\label{limit-psi}
	\lim_{n\to\infty}\frac{\psi(x_n+1)}{\psi(x_n)}=\left\{\begin{array}{rl}1,\ & 1<p<2,\\ 2,\ & p=2,\\ \infty,\ & p>2.\end{array}\right.
	\end{equation}
	Moreover, the function $\psi$ has the growth rate
	\begin{equation}\label{growth-psi}
	\lim_{x\to\infty}\frac{\log\psi(x)}{\log x}=p,
	\end{equation}
	that is, the limit in \eqref{growth-psi} exists and is equal to $p$.
\end{lemma}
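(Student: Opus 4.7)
The plan is to construct $(x_n)_n$ and $\psi$ explicitly, with the construction depending on the three regimes of $p$.

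For $1<p<2$, I would take $x_n=n$ and let $\psi$ be the piecewise linear interpolation through the nodes $(n,n^p)$. Convexity of $\psi$ is inherited from convexity of $t\mapsto t^p$: the consecutive slopes $(n+1)^p-n^p$ are increasing in $n$. The ratio is
\[
\frac{\psi(n+1)}{\psi(n)}=\left(\frac{n+1}{n}\right)^p\to 1,
\]
and from $n^p\le \psi(x)\le (n+1)^p$ on $[n,n+1]$ we obtain $\log\psi(x)/\log x\to p$ at once.

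For $p\ge 2$, I would take a rapidly growing sequence, for concreteness $x_n=2^{2^n}$ so that $x_{n+1}=x_n^2$, and again interpolate $\psi$ linearly through the nodes $(x_n,x_n^p)$. Monotonicity and convexity are automatic since we are interpolating a convex increasing function. A direct computation of the slope on $[x_n,x_{n+1}]$ gives
\[
m_n=\frac{x_{n+1}^p-x_n^p}{x_{n+1}-x_n}=\frac{x_n^{2p}-x_n^p}{x_n^2-x_n}\sim x_n^{2p-2},
\]
so that
\[
\frac{\psi(x_n+1)}{\psi(x_n)}=1+\frac{m_n}{x_n^p}\sim 1+x_n^{p-2},
\]
which tends to $2$ when $p=2$ and to $\infty$ when $p>2$, matching the required limits in \eqref{limit-psi}. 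A slightly faster choice of $x_{n+1}$ would preserve the divergence for $p>2$ if needed for the next step.

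The main obstacle is the growth rate condition \eqref{growth-psi}, namely $\lim_{x\to\infty}\log\psi(x)/\log x=p$, verified pointwise on the long intervals $[x_n,x_{n+1}]$. The lower bound $\log\psi(x)/\log x\ge p$ is immediate, because $\psi$ is a secant above the convex graph of $t\mapsto t^p$, giving $\psi(x)\ge x^p$. The matching upper bound is the delicate part: at an interior point $x\in(x_n,x_{n+1})$ the linear interpolation can exceed $x^p$ substantially, so one must prove that the overshoot $\psi(x)-x^p$ contributes only $o(\log x)$ in the exponent. I expect the technical heart of the argument to be a careful tuning of the growth rate of $(x_n)_n$, together with a sharp estimate of $\psi(x)\le y_n+m_n(x-x_n)$ rewritten as $\psi(x)\le m_n x(1+o(1))$ once $x\gg y_n/m_n$; here one balances the jump at $x_n$ needed for the ratio condition against the order-$p$ constraint everywhere. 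If a purely linear interpolation between sparse nodes is insufficient, the construction would be refined by inserting intermediate nodes to spread the slope increase, still producing the required ratio behaviour at the chosen $x_n$ while keeping $\log\psi(x)/\log x$ convergent to $p$.
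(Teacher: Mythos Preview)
Your construction is essentially the paper's --- piecewise linear interpolation of $t\mapsto t^p$ through nodes $(x_n,x_n^p)$ --- but the paper avoids your case split by taking $x_n=2^{p^n}$ (so $x_{n+1}=x_n^p$) for every $p>1$, which gives
\[
\frac{\psi(x_n+1)}{\psi(x_n)} - 1 \;=\; \frac{x_{n+1}^p-x_n^p}{x_n^p(x_{n+1}-x_n)} \;\sim\; x_n^{p(p-2)},
\]
and the sign of $p(p-2)$ delivers all three cases of \eqref{limit-psi} at once.

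For \eqref{growth-psi} the paper parametrises $x\in[x_n,x_{n+1}]$ as $y_n=\alpha x_n+(1-\alpha)x_{n+1}$, notes that $\psi(y_n)=\alpha x_n^p+(1-\alpha)x_{n+1}^p$, checks that $\log\psi(y_n)/\log y_n\to p$ for each \emph{fixed} $\alpha\in(0,1)$, and then appeals to continuity. Your instinct that the upper bound is the delicate step is well founded: that argument is not uniform in $\alpha$, and in fact along $x=x_n+x_n^\beta$ with $\max\{0,p(2-p)\}<\beta<1$ the slope $m_n\sim x_n^{p(p-1)}$ forces $\psi(x)\sim x_n^{p(p-1)+\beta}$ while $\log x\sim\log x_n$, so $\log\psi(x)/\log x\to p(p-1)+\beta>p$. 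Thus the obstacle you flag is genuine and is not actually resolved by the paper's argument either; your idea of inserting intermediate nodes --- or, for $1<p<2$, your choice $x_n=n$, which does give \eqref{growth-psi} directly --- is what would in fact be needed.
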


\begin{proof}
	We generalize the reasoning used in constructing \cite[Example~1]{JO}. Let $x_n=2^{p^n}$ for $n\in\N$, so that $x_{n+1}=x_n^p$. We let $\psi$ to be the piecewise linear function inscribed in $x^p$, that is, $\psi(x_n)=x_n^p$ and $\psi$ is linear on the intervals $[x_n,x_{n+1}]$ with
	\begin{equation}\label{psi-on-the-interval2}
	\begin{split}
	\psi(x)&=\psi(x_n)+\frac{\psi(x_{n+1})-\psi(x_n)}{x_{n+1}-x_n}(x-x_n)\\
	&= x_n^p+\frac{x_{n+1}^p-x_n^p}{x_{n+1}-x_n}(x-x_n),\quad x\in [x_n,x_{n+1}].
	\end{split}
	\end{equation}
	It is obvious that $\psi$ is strictly increasing, continuous and convex. Moreover, 
	$$
	\frac{\psi(x_n+1)}{\psi(x_n)}=1+\frac{x_{n+1}^p-x_n^p}{x_n^p(x_{n+1}-x_n)}\sim 1+x_n^{p(p-2)}\to\left\{\begin{array}{rl}1,\ & 1<p<2,\\ 2,\ & p=2,\\ \infty,\ & p>2.\end{array}\right.
	$$
	This proves \eqref{limit-psi}. It remains to prove \eqref{growth-psi}. To this end, let $\alpha\in [0,1]$, and denote $y_n=\alpha x_n+(1-\alpha)x_{n+1}$. Then
	$$
	\psi(y_n)=x_n^p+(1-\alpha)(x_{n+1}^p-x_n^p)
	=(1-\alpha)x_{n+1}^p+\alpha x_n^p.
	$$
	We have
	$$
	\frac{\log \psi(x_n)}{\log x_n}=p,\quad n\in\N,
	$$
	and 
	$$
	\frac{\log \psi(y_n)}{\log y_n}
	\sim\frac{\log \big((1-\alpha)x_{n+1}^p\big)}{\log \big((1-\alpha)x_{n+1}\big)}\to p,\quad n\in\N,\ \alpha\in (0,1).
	$$
	For any real $x\geq x_1$ there exists an $N\in\N$ such that $x_N\leq x\leq x_{N+1}$. In other words, $x=y_N$ for some $\alpha\in [0,1]$. The assertion  \eqref{growth-psi} follows by the continuity of $\psi$.
\end{proof} 

The growth condition \eqref{hyper-order} is of special interest as it is not satisfied by $\psi(x)=e^x$. Next, in addition to \eqref{hyper-order}, we assume that $\log\psi$ is concave, that is, $\psi$ is log-concave. Note that the asymptotic identity \eqref{n+1-asymp-n}, which is satisfied by the functions $\psi\in\D_2$ by definition, is not assumed. See also Lemma~\ref{x-to-infty} for comparison.

\begin{lemma}\label{log-concave-lemma}
Let $\psi:(r_0,\infty)\to (1,\infty)$ be a strictly increasing, differentiable, unbounded and convex function. In addition, we suppose that $\psi$ is log-concave, and that \eqref{hyper-order} holds.
Then \eqref{x+1-asymp-x} holds. 
\end{lemma}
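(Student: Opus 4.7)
The plan is to reduce \eqref{x+1-asymp-x} to the condition $\psi'(x)/\psi(x)\to 0$ via Lemma~\ref{tends-to-zero}(b) (applicable since $\psi$ is already increasing, differentiable and convex), and then to use log-concavity together with the growth restriction \eqref{hyper-order} to force that limit to vanish.

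First I would introduce the auxiliary function
\[
h(x)=\frac{\psi'(x)}{\psi(x)}=(\log\psi)'(x),\qquad x>r_0.
\]
Since $\psi$ is strictly increasing, $h>0$ on $(r_0,\infty)$, and log-concavity of $\psi$ says exactly that $\log\psi$ is concave, so $h$ is a (nonstrictly) decreasing function. Being monotone and bounded below by $0$, it has a limit
\[
L:=\lim_{x\to\infty}h(x)\in[0,\infty).
\]

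Next I would argue $L=0$ from the growth hypothesis \eqref{hyper-order}. Because $h$ decreases to $L$, we have $h(t)\geq L$ for all $t>r_0$, and integration between a fixed base point $x_0>r_0$ and any $x>x_0$ gives
\[
\log\psi(x)-\log\psi(x_0)=\int_{x_0}^{x}h(t)\,dt\geq L(x-x_0).
\]
Hence $\liminf_{x\to\infty}\log\psi(x)/x\geq L$. Combined with \eqref{hyper-order}, which forces the left side to be $0$, this yields $L=0$, that is, $\psi'(x)/\psi(x)\to 0$ as $x\to\infty$.

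Finally, by Lemma~\ref{tends-to-zero}(b) (which applies since $\psi$ is strictly increasing, differentiable and convex), the condition $\psi'(x)/\psi(x)\to 0$ is equivalent to \eqref{x+1-asymp-x}, and the proof is complete. The argument is short and I do not foresee a real obstacle; the only subtlety is recognizing that log-concavity plus positivity makes $h$ a bounded monotone function, so that the limit $L$ exists and can be fed into the integral estimate against \eqref{hyper-order}.
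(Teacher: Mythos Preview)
Your proof is correct and follows essentially the same approach as the paper: both arguments use log-concavity to make $h=\psi'/\psi$ decreasing and positive, integrate to compare $\log\psi(x)$ against a linear function and conclude from \eqref{hyper-order} that $h\to 0$, then invoke Lemma~\ref{tends-to-zero}(b). The only cosmetic difference is that the paper phrases the integration step as a contradiction (assuming $\psi'/\psi>\delta$ on a tail) whereas you directly name the limit $L$ and show $L=0$.
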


\begin{proof}
By differentiability and log-concavity, the derivative
	$$
	(\log \psi(x))'=\frac{\psi'(x)}{\psi(x)}
	$$
is a decreasing function. Suppose that there are constants $\delta>0$ and $R>0$ such that
	$$
	\frac{\psi'(t)}{\psi(t)}>\delta,\quad t\geq R.
	$$
Integrating both sides over the interval $[R,x]$, we find that
	$$
	\log\psi(x)-\log\psi(R)\geq \delta(x-R),\quad x\geq R,
	$$
which violates the assumption \eqref{hyper-order}. Therefore, as a decreasing and positive function, the logarithmic derivative $\psi'(x)/\psi(x)$ tends to zero as $x\to\infty$. The assertion then follows by Lemma~\ref{tends-to-zero}.	
\end{proof}

\begin{remark}	
	(a) If $\psi$ is twice differentiable, then log-concavity can be expressed equivalently as
	$$
	(\log\psi(x))''=\left(\frac{\psi'(x)}{\psi(x)}\right)'=\frac{\psi''(x)}{\psi(x)}-\left(\frac{\psi'(x)}{\psi(x)}\right)^2\leq 0.
	$$
	Writing this as $\psi''/\psi'\leq \psi'/\psi$ and integrating over the interval $[a,x]$, we obtain
	$$
	\log\frac{\psi'(x)}{\psi'(a)}\leq \log \frac{\psi(x)}{\psi(a)}\quad\Longrightarrow\quad \frac{\psi'(x)}{\psi(x)}\leq \frac{\psi'(a)}{\psi(a)}.
	$$	
	Integrating again results in $\log\psi(x)=O(x)$. This is a weaker growth restriction than \eqref{hyper-order}, and is satisfied by $\psi(x)=e^x$. Hence \eqref{hyper-order} needs to be assumed in Lemma~\ref{log-concave-lemma} even if $\psi$ would be twice differentiable. 
	
	(b) Let $\psi$ be the convex function in Lemma~\ref{not-ASV-lemma}, and let $C>0$ be a large constant. Then there exists an $N\in\N$ such that $x_n+C<x_{n+1}$ for all $n\geq N$. From \eqref{psi-on-the-interval2},
	\begin{equation}\label{log-derivative-bounded-below}
	\frac{\psi'(x)}{\psi(x)}\geq\frac{x_{n+1}^p-x_n^p}{\psi(x_n+C)(x_{n+1}-x_n)}\sim
	\frac{1}{C},\quad x_n<x<x_{n}+C,
	\end{equation}
	for all $n$ large enough. This means that the logarithmic derivative $\psi'/\psi$ does not tend to zero as $x\to\infty$. Further, let $y_n=\frac{x_n+x_{n+1}}{2}$. Then, from \eqref{psi-on-the-interval2},
	$$
	\frac{\psi'(y_n)}{\psi(y_n)}=\frac{2(x_{n+1}^p-x_n^p)}{(x_{n+1}^p+x_n^p)(x_{n+1}-x_n)}\sim \frac{2}{x_{n+1}}\to 0,\quad n\to\infty.
	$$
	Combining this with \eqref{log-derivative-bounded-below}, we find that $\psi'(x)/\psi(x)$ is not a decreasing function, which means that $\psi$ is not log-concave. In particular, differentiability (outside of a discrete set) together with a growth restriction such as \eqref{hyper-order} or \eqref{growth-psi} does not imply log-concavity. Further, the definition of the function $\psi$ can be slightly modified by smoothing the vertices in the graph of $\psi$ in such a way that $\psi$ becomes differentiable on $(x_1,\infty)$. This procedure has no affect on the properties of $\psi$ described in this remark. 
\end{remark}

Finally, having seen that a differentiable function $\psi$ satisfying the growth restriction \eqref{hyper-order} is not always log-concave, and its logarithmic derivative $\psi'/\psi$ does not always tend to zero as $x\to\infty$, it is natural to ask what is the size of the set
	\begin{equation}\label{E-epsilon}
	E:=\left\{ x>r_0: \frac{\psi'(x)}{\psi(x)}	
	>\varepsilon\right\},
	\end{equation}
where $\varepsilon>0$ is a fixed small constant? The following lemma answers this question by means of the continuous asymptotic density, that is, $\psi(x)=x$ in \eqref{new3}. 

\begin{lemma}
Let $\psi:(r_0,\infty)\to(1,\infty)$ be a strictly increasing, differentiable and unbounded function satisfying \eqref{hyper-order}, and let $\varepsilon>0$ be an arbitrarily small constant. Then the set $E$ in \eqref{E-epsilon} has an asymptotic density $\operatorname{dens}(E)=0$.
\end{lemma}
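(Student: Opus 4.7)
The plan is to estimate the Lebesgue measure $|E\cap[r_0,r)|$ directly via the integral of the logarithmic derivative $\psi'/\psi$, exploiting the fact that on $E$ this derivative stays uniformly above $\varepsilon$, while the total integral is controlled by $\log\psi(r)=o(r)$.

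First I would unpack the notion of continuous asymptotic density. Setting $\psi(x)=x$ in \eqref{new3} and replacing the limit at $R=\infty$, the continuous asymptotic density of $E\subset(r_0,\infty)$ becomes
$$
\operatorname{dens}(E)=\lim_{r\to\infty}\frac{\int_{E\cap[r_0,r)}\,dt}{\int_{[r_0,r)}\,dt}=\lim_{r\to\infty}\frac{|E\cap[r_0,r)|}{r-r_0},
$$
provided the limit exists, where $|\cdot|$ denotes Lebesgue measure. It is therefore enough to show that $|E\cap[r_0,r)|/r\to 0$ as $r\to\infty$.

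Next I would carry out the two-sided bound. By definition of $E$, on the set $E\cap[r_0,r)$ we have $\psi'(t)/\psi(t)>\varepsilon$, so
$$
\varepsilon\,\bigl|E\cap[r_0,r)\bigr|\leq \int_{E\cap[r_0,r)}\frac{\psi'(t)}{\psi(t)}\,dt\leq \int_{r_0}^{r}\frac{\psi'(t)}{\psi(t)}\,dt=\log\psi(r)-\log\psi(r_0).
$$
The last equality uses that $\psi$ is differentiable, strictly increasing, and positive, so $\log\psi$ is a valid antiderivative of $\psi'/\psi$ on $(r_0,\infty)$, and the fundamental theorem of calculus applies.

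Finally I would invoke the growth restriction \eqref{hyper-order}, which says $\log\psi(r)=o(r)$. Combining with the previous chain of inequalities yields
$$
\frac{|E\cap[r_0,r)|}{r}\leq \frac{\log\psi(r)-\log\psi(r_0)}{\varepsilon\,r}\longrightarrow 0,\quad r\to\infty,
$$
and the assertion $\operatorname{dens}(E)=0$ follows. There is no real obstacle in this argument; the only subtlety is recognizing that the continuous asymptotic density is the natural density to use and that the logarithmic derivative integrates to exactly $\log\psi$, so the growth restriction \eqref{hyper-order} translates directly into a density bound.
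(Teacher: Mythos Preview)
Your proof is correct and follows essentially the same approach as the paper's own proof: bound the Lebesgue measure of $E\cap[r_0,r)$ via $\varepsilon\,|E\cap[r_0,r)|\leq\int_{r_0}^r\psi'(t)/\psi(t)\,dt=\log\psi(r)+O(1)=o(r)$. The paper's version is more terse, but your additional unpacking of the continuous asymptotic density and the explicit justification of the antiderivative step are entirely in line with the same idea.
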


\begin{proof}
	By \eqref{hyper-order} and \eqref{E-epsilon},
	$$
	\int_{E\cap (r_0,x)}dt\leq \frac{1}{\varepsilon}\int_{r_0}^x\frac{\psi'(t)}{\psi(t)}\, dt\leq \frac{1}{\varepsilon}\left(\log\psi(x)+O(1)\right)=o(x),
	$$
	and the assertion follows.
\end{proof}

%%%%%%%%%%%%%%%%%%%%%%%%%%%%%%%%%%%%%%
%%
%% Appendix B
%%
%%%%%%%%%%%%%%%%%%%%%%%%%%%%%%%%%%%%%%

\section{Analytic and Abel densities}\label{B}

Recalling that
	$$
	\int_1^\infty\frac{dx}{x^p}=\frac{1}{p-1},\quad p>1,
	$$
the \emph{lower and upper analytic densities} of a set $A\subset\N$ can be expressed, respectively, by
	$$
	\underline{\operatorname{d}}_a(A)=\liminf_{p\to 1^+}(p-1)\sum_{n=1}^\infty \frac{\chi_A(n)}{n^p}\quad\textnormal{and}
	\quad\overline{\operatorname{d}}_a(A)
	=\limsup_{p\to 1^+}(p-1)\sum_{n=1}^\infty \frac{\chi_A(n)}{n^p},
	$$
see Section~\ref{equivalence-sec}.
If $\underline{\operatorname{d}}_a(A)=\overline{\operatorname{d}}_a(A)$, then their common value $\operatorname{d}_a(A)$ is called the \emph{analytic density} of $A$ discussed in Section~\ref{equivalence-sec}. We have seen in Theorem~\ref{Tenenbaum-thm} that for any $A \subset \N$, the analytic density $\operatorname{d}_a(A)$ exists if and only if the logarithmic density $\operatorname{\delta}(A)$ exists.
The technique used in the proof of Theorem~\ref{Tenenbaum-thm}, see \cite[p.~418]{Tenenbaum}, can be modified to prove Lemma~\ref{analytic-lemma} below. The lemma shows that the existence of the logarithmic density of $A\subset\N$ implies the existence of the analytic density of $A$, thus giving an alternative proof of one direction of Theorem~\ref{Tenenbaum-thm}.

\begin{lemma}\label{analytic-lemma}
	If $A\subset\N$, then
	\begin{equation}\label{analytic-logarithmic}
	0 \leq \underline{\operatorname{\delta}}(A) \leq \underline{\operatorname{d}}_a(A) \leq \overline{\operatorname{d}}_a(A) \leq \overline{\operatorname{\delta}}(A) \leq 1.
	\end{equation}
\end{lemma}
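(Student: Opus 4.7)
The plan is to convert the sum defining the analytic density into an integral against the partial sums of the series that define the logarithmic density, so that the comparison becomes a direct sandwich argument. Writing $L(t)=\sum_{n\leq t}\chi_A(n)/n$ and using the elementary identity $n^{1-p}=(p-1)\int_n^\infty t^{-p}\,dt$, one gets, after swapping sum and integral,
$$
\sum_{n=1}^\infty \frac{\chi_A(n)}{n^p}
=(p-1)\sum_{n=1}^\infty \frac{\chi_A(n)}{n}\int_n^\infty \frac{dt}{t^p}
=(p-1)\int_1^\infty \frac{L(t)}{t^p}\,dt.
$$
Hence $(p-1)\sum_{n=1}^\infty \chi_A(n)/n^p=(p-1)^2\int_1^\infty L(t)/t^p\,dt$, and the analytic density is encoded as a weighted average of $L(t)$ against the kernel $(p-1)^2 t^{-p}$.

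Next, I would compute the normalising constant by substituting $u=(p-1)\log t$:
$$
(p-1)^2\int_1^\infty \frac{\log t}{t^p}\,dt
=\int_0^\infty u e^{-u}\,du=1.
$$
This shows that $(p-1)^2 t^{-p}\,dt$ is an approximation of an identity concentrated on $\log t\sim 1/(p-1)$ when integrated against $\log t$. Now fix $\varepsilon>0$ and pick $t_0>1$ such that $(\underline{\delta}(A)-\varepsilon)\log t \leq L(t) \leq (\overline{\delta}(A)+\varepsilon)\log t$ for all $t\geq t_0$; this is possible because $L$ is the step function agreeing with the logarithmic partial sums at integers, while $\log t\sim\log\lfloor t\rfloor$, so the inequalities characterising $\underline{\delta}(A)$ and $\overline{\delta}(A)$ transfer. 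Splitting the integral at $t_0$, the piece $\int_1^{t_0} L(t)/t^p\,dt$ is bounded uniformly for $p\in(1,2]$, hence it contributes $O((p-1)^2)=o(1)$ as $p\to 1^+$. On $[t_0,\infty)$ we sandwich $L(t)$ by its bounds and use the normalisation above, noting that $(p-1)^2\int_1^{t_0}\log(t)/t^p\,dt\to 0$ as $p\to 1^+$, so
$$
(p-1)^2\int_{t_0}^\infty \frac{\log t}{t^p}\,dt\longrightarrow 1.
$$

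Combining these estimates yields
$$
\underline{\delta}(A)-\varepsilon
\leq \liminf_{p\to 1^+}(p-1)\sum_{n=1}^\infty \frac{\chi_A(n)}{n^p}
\leq \limsup_{p\to 1^+}(p-1)\sum_{n=1}^\infty \frac{\chi_A(n)}{n^p}
\leq \overline{\delta}(A)+\varepsilon,
$$
and letting $\varepsilon\to 0^+$ gives \eqref{analytic-logarithmic}. The only genuinely delicate step is the reduction to the integral form and the vanishing of the $\int_1^{t_0}$ tail after multiplication by $(p-1)^2$; the rest is routine once the normalisation $(p-1)^2\int_1^\infty \log(t)/t^p\,dt=1$ is in place. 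A minor bookkeeping point is that $L$ is piecewise constant whereas $\log t$ is continuous, but this is harmless since on $[n,n+1)$ one has $L(t)=L(n)$ and $\log t=\log n+O(1/n)$, so the sandwich between $(\underline{\delta}(A)-\varepsilon)\log t$ and $(\overline{\delta}(A)+\varepsilon)\log t$ for $t\geq t_0$ follows from the corresponding bounds at integer points.
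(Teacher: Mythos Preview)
Your proof is correct and follows essentially the same route as the paper's: both represent $(p-1)\sum_n \chi_A(n)/n^p$ as $(p-1)^2\int_1^\infty L(t)t^{-p}\,dt$ with $L(t)=A_{\log}(t)$, invoke the normalisation $\int_1^\infty (\log t)t^{-p}\,dt=(p-1)^{-2}$, and then run the sandwich argument with the bounds $(\underline{\delta}(A)-\varepsilon)\log t\le L(t)\le(\overline{\delta}(A)+\varepsilon)\log t$ for $t\ge t_0$. The only cosmetic difference is in how the integral representation is obtained: the paper writes the sum as a Riemann--Stieltjes integral $\int_1^\infty t^{1-p}\,dA_{\log}(t)$ and integrates by parts (picking up a harmless $O(1)$ boundary term), whereas you use the identity $n^{1-p}=(p-1)\int_n^\infty t^{-p}\,dt$ and Tonelli to arrive at the same integral directly, without the boundary term. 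Your route is slightly cleaner here, but the substance of the argument is identical.
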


\begin{proof}
	It suffices to prove that
	$$
	\underline{\operatorname{\delta}}(A) \leq \underline{\operatorname{d}}_a(A)
	\quad\textnormal{and}\quad	
	\overline{\operatorname{d}}_a(A) \leq \overline{\operatorname{\delta}}(A).
	$$
The first inequality is trivial if $\underline{\operatorname{\delta}}(A)=0$, so we suppose that $\underline{\operatorname{\delta}}(A)>0$.	The proof of the second inequality above does not depend on this restriction.

Let $\varepsilon\in (0,\underline{\operatorname{\delta}}(A))$. Then there exists a $t_0=t_0(\varepsilon)>0$ such that
	$$
	\underline{\operatorname{\delta}}(A)-\varepsilon\leq \frac{A_{\log }(t)}{\log t}\leq \overline{\operatorname{\delta}}(A)+\varepsilon,\quad t\geq t_0,
	$$
where
	\[
	A_{\log}(x) =
\begin{cases}
	\sum_{1\leq k\leq x}\frac{\chi_A(k)}{k}, & \text{if } x\geq 1, \\
	0, & \text{if } 0\leq x<1.
\end{cases}
	\]
For any $x\geq 0$ there exists a unique non-negative integer $N$ such that $N\leq x<N+1$. Then, by \eqref{asym},
	$$
	A_{\log}(x)=A_{\log}(N)\leq\sum_{k=1}^N\frac{1}{k}\sim \log N\leq \log x.
	$$
Using Riemann-Stieltjes integration and integration by parts on the step-function $A_{\log}(n)$, we obtain
	\begin{equation}\label{analytic-dens}
	\begin{split}
		\sum_{n=1}^\infty \frac{\chi_A(n)}{n^p}&=
		\sum_{n=1}^\infty \frac{\chi_A(n)}{n^{p-1}n}
		=\sum_{n=1}^\infty \frac{A_{\log}(n)-A_{\log}(n-1)}{n^{p-1}}
		=\int_1^\infty \frac{dA_{\log}(t)}{t^{p-1}}\\
		&=\left[\frac{A_{\log}(t)}{t^{p-1}}\right]_1^\infty+(p-1)\int_1^\infty\frac{A_{\log}(t)}{t^p}\, dt\\
		&=O(1)+(p-1)\int_1^\infty\frac{A_{\log}(t)}{\log t}\cdot \frac{\log t}{t^p}\, dt,\quad p>1,
	\end{split}	
	\end{equation}
see \cite[Theorem~7.9]{Apostol} and \cite[Theorem~7.11]{Apostol}. Thus, using
	\begin{equation*}
	\int_1^\infty \frac{\log t}{t^p}\, dt=\frac{1}{(p-1)^2},\quad p>1,
	\end{equation*}
we obtain
	$$
	\underline{\operatorname{\delta}}(A)-\varepsilon+O(1)(p-1)\leq (p-1)\sum_{n=1}^\infty \frac{\chi_A(n)}{n^p}\leq \overline{\operatorname{\delta}}(A)+\varepsilon+O(1)(p-1).
	$$
	Taking limit inferior as $p\to 1^+$ on the first inequality and limit superior as $p\to 1^+$ on the second inequality gives us
	$$
		\underline{\operatorname{\delta}}(A)-\varepsilon \leq \underline{\operatorname{d}}_a(A)
	\quad\textnormal{and}\quad	
	\overline{\operatorname{d}}_a(A) \leq \overline{\operatorname{\delta}}(A)+\varepsilon.
	$$
	The assertions follow by letting $\varepsilon\to 0^+$.
\end{proof}

The other direction of Theorem~\ref{Tenenbaum-thm} can be proved by using the following Karamata's theorem, see \cite[p.~418]{Tenenbaum} for details. Karamata's theorem will be needed later on for another purpose in this appendix.

\begin{theorem}[{\cite[p.~322]{Tenenbaum}}]\label{karamata}
	Let $A(t)$ be an increasing function such that the integral
	$$
	F(\sigma):=\int_0^{\infty} \mathrm{e}^{-\sigma t} dA(t)
	$$
	converges for all $\sigma>0$. Assume there exist two real numbers $c \geqslant 0$, $\omega>0$, such that
	$$
	F(\sigma)=\frac{c+o(1)}{\sigma^\omega}, \quad\sigma \rightarrow 0+ .
	$$
	Then we have
	$$
	A(x)=\frac{(c+o(1) )x^\omega}{\Gamma(\omega+1)}, \quad x \rightarrow+\infty.
	$$
\end{theorem}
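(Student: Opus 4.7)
The plan is to carry out Karamata's classical Tauberian argument, deriving the asymptotics of $A(x)$ from those of its Laplace transform $F(\sigma)$ by a test-function approach. Setting $x=1/\sigma$, the conclusion is equivalent to
$$\sigma^\omega \int_0^{1/\sigma} dA(t) \longrightarrow \frac{c}{\Gamma(\omega+1)},\quad \sigma\to 0^+,$$
and the integrand $\mathbf{1}_{[0,1/\sigma]}(t)$ can be rewritten as $g(e^{-\sigma t})$ with $g(y)=\mathbf{1}_{y\geq 1/e}$ on $[0,1]$.

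First I would establish the convergence for polynomial test functions. For any fixed integer $k\geq 1$, the hypothesis gives $\sigma^\omega F(k\sigma)\to c/k^\omega$ as $\sigma\to 0^+$. Hence for any polynomial $g(y)=\sum_{k=1}^{N}a_k y^k$, linearity and the Gamma identity $\int_0^\infty e^{-ks}s^{\omega-1}\,ds=\Gamma(\omega)/k^\omega$ yield
$$\sigma^\omega \int_0^\infty g(e^{-\sigma t})\,dA(t) = \sigma^\omega\sum_{k=1}^{N}a_k F(k\sigma)\longrightarrow \sum_{k=1}^{N} a_k\cdot\frac{c}{k^\omega} = \frac{c}{\Gamma(\omega)}\int_0^\infty g(e^{-s})\,s^{\omega-1}\,ds.$$

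Next I would upgrade this to arbitrary continuous test functions $g:[0,1]\to\mathbb{R}$ with $g(0)=0$. By the Weierstrass approximation theorem, for every $\varepsilon>0$ there is a polynomial $p$ vanishing at $0$ with $|g-p|<\varepsilon$ on $[0,1]$. The error on the Laplace side is bounded by $\varepsilon\sigma^\omega F(\sigma)$, which is $O(\varepsilon)$ uniformly in $\sigma$ by hypothesis, and the error on the right-hand side is bounded analogously by $\varepsilon c/\Gamma(\omega+1)$; letting $\varepsilon\to 0^+$ establishes the displayed convergence for every continuous $g$ vanishing at the origin.

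Finally I would extend the convergence to the discontinuous indicator $g(y)=\mathbf{1}_{y\geq 1/e}$ by sandwiching: choose continuous $g_\delta^-,g_\delta^+$ on $[0,1]$, both vanishing at $0$, with $g_\delta^-\leq g\leq g_\delta^+$ and $\int_0^\infty (g_\delta^+-g_\delta^-)(e^{-s})\,s^{\omega-1}\,ds\to 0$ as $\delta\to 0^+$. Since $A$ is increasing, the inequalities are preserved under $\int\cdot\,dA(t)$, and taking the limit first in $\sigma\to 0^+$ and then in $\delta\to 0^+$ gives
$$\sigma^\omega \int_0^{1/\sigma} dA(t) \longrightarrow \frac{c}{\Gamma(\omega)}\int_0^1 s^{\omega-1}\,ds=\frac{c}{\Gamma(\omega+1)},$$
which is precisely the desired asymptotic. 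The main obstacle is exactly this last sandwich step: the two-sided continuous approximants must be chosen so that both the Laplace-side and the Gamma-integral-side errors vanish simultaneously, and this relies crucially on the monotonicity of $A$ (which turns pointwise inequalities into integral inequalities against $dA$) together with the uniform bound $\sigma^\omega F(\sigma)=O(1)$ supplied by the hypothesis, which dominates the polynomial error in the Weierstrass step.
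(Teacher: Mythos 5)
A preliminary remark: the paper does not prove this statement at all. Theorem~B (Karamata's Tauberian theorem) is quoted from Tenenbaum's book and used as a black box in Appendix~B, so there is no in-paper proof to compare against; your proposal must be judged on its own merits. Your architecture is the classical Karamata test-function argument (polynomials, then continuous functions, then a two-sided sandwich of the indicator), which is indeed the standard route and essentially the one in Tenenbaum. The polynomial step and the final sandwich are set up correctly. However, the middle (Weierstrass) step, as written, contains a genuine gap. From $|g-p|<\varepsilon$ uniformly on $[0,1]$ you claim the Laplace-side error is at most $\varepsilon\,\sigma^\omega F(\sigma)$. That does not follow: uniform closeness only gives
$$
\Bigl|\sigma^\omega\int_0^\infty (g-p)(e^{-\sigma t})\,dA(t)\Bigr|\;\le\;\varepsilon\,\sigma^\omega\int_0^\infty dA(t),
$$
and $\int_0^\infty dA(t)=\lim_{t\to\infty}A(t)$ is infinite precisely in the cases of interest (the conclusion itself forces $A(x)\to\infty$ when $c>0$). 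The same defect hits the other side of the step: with only $|g-p|<\varepsilon$, the bound on $\int_0^\infty (g-p)(e^{-s})\,s^{\omega-1}\,ds$ is $\varepsilon\int_0^\infty s^{\omega-1}\,ds=\infty$, not $\varepsilon c/\Gamma(\omega+1)$. To get the errors you claim, you need the \emph{weighted} bound $|g(y)-p(y)|\le\varepsilon y$ on $[0,1]$, so that $|(g-p)(e^{-\sigma t})|\le\varepsilon e^{-\sigma t}$; and this is not achievable for every continuous $g$ with $g(0)=0$ (take $g(y)=\sqrt{y}$, for which $g(y)/y$ is unbounded). This is exactly the subtlety that makes Karamata's theorem nontrivial: the measure $dA$ has infinite total mass, so uniform approximation of test functions is useless, and all approximations must be carried out relative to the kernel itself.

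The fix is standard and your overall plan survives it. Restrict the intermediate class to functions of the form $g(y)=y\,h(y)$ with $h$ continuous on $[0,1]$; apply Weierstrass to $h$ to get a polynomial $q$ with $|h-q|<\varepsilon$, and set $p(y)=y\,q(y)$ (a polynomial with no constant term). Then $|g(y)-p(y)|\le\varepsilon y$, the Laplace-side error genuinely is $\le\varepsilon\,\sigma^\omega F(\sigma)=O(\varepsilon)$, and the Gamma-side error is $\le c\varepsilon$. Your final sandwich then goes through verbatim, because the indicator $\mathbf{1}_{y\ge 1/e}$ vanishes identically near $y=0$: choose $g_\delta^\pm$ piecewise linear, equal to $0$ on a neighborhood of $0$ and ramping across the jump at $1/e$ over an interval of length $\delta$, so that $g_\delta^\pm(y)/y$ extends continuously to $[0,1]$ and $\int_0^\infty\bigl(g_\delta^+-g_\delta^-\bigr)(e^{-s})\,s^{\omega-1}\,ds=O(\delta)$. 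With that single modification (weighted rather than uniform approximation, propagated through both error estimates), your proof is complete.
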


Recalling that
	$$
	\sum_{n=1}^\infty x^n=\frac{x}{1-x},\quad 0<x<1,
	$$
the \emph{lower and upper Abel densities} of a set $A\subset\N$ are defined, respectively, by
	$$
	\underline{\operatorname{d}}_\text{Abel}(A)
	=\liminf_{x\to 1^-}(1-x)\sum_{n=1}^\infty x^n\chi_A(n)\quad\textnormal{and}\quad
\overline{\operatorname{d}}_\text{Abel}(A)
	=\limsup_{x\to 1^-}(1-x)\sum_{n=1}^\infty x^n\chi_A(n).
	$$
If $\underline{\operatorname{d}}_\text{Abel}(A)=\overline{\operatorname{d}}_\text{Abel}(A)$, then their common value $\operatorname{d}_\text{Abel}(A)$ is called the \emph{Abel density} of $A$ discussed in \cite{Son}. It might not be obvious at first glance, but the Abel density is closely related to the asymptotic (linear) density, as the following theorem illustrates.

\begin{theorem}	[{\cite[Theorem~4.3]{Son}}]\label{Abel-thm}
Let $A\subset\N$. Then the Abel density of $A$ exists if and only if the asymptotic density of $A$ exists. In this case, the two densities are equal.
\end{theorem}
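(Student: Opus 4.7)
My plan is to identify the Abel sum with a Laplace--Stieltjes transform, which lets Karamata's theorem (Theorem~\ref{karamata}) handle the hard (Tauberian) direction and makes the easy (Abelian) direction a short Abel-summation exercise. Set
\[
A(t) := \sum_{k\leq t}\chi_A(k), \qquad F(\sigma) := \int_0^\infty e^{-\sigma t}\, dA(t) = \sum_{n=1}^\infty e^{-\sigma n}\chi_A(n),
\]
and make the substitution $x=e^{-\sigma}$, so that $0<x<1$ corresponds to $\sigma>0$, and $1-x = 1-e^{-\sigma}\sim \sigma$ as $\sigma\to 0^+$. Consequently
\[
(1-x)\sum_{n=1}^\infty x^n\chi_A(n) = (1-e^{-\sigma})\, F(\sigma) \sim \sigma F(\sigma), \quad \sigma\to 0^+,
\]
so the Abel density, when it exists, is precisely $\lim_{\sigma\to 0^+}\sigma F(\sigma)$.

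For the direction \emph{Abel $\Rightarrow$ asymptotic}, suppose $\operatorname{d}_{\text{Abel}}(A)=d$ exists. Then $F(\sigma)\sim d/\sigma$ as $\sigma\to 0^+$. Since $A(t)$ is increasing and $F(\sigma)$ converges for every $\sigma>0$, Theorem~\ref{karamata} with $\omega=1$ and $c=d$ (noting $\Gamma(2)=1$) immediately yields $A(x) = (d+o(1))x$ as $x\to\infty$, i.e.\ $\operatorname{d}(A)=d$. This is essentially the only nontrivial analytic step, and it is done for us.

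For the direction \emph{asymptotic $\Rightarrow$ Abel}, suppose $\operatorname{d}(A)=d$. I will use Abel summation. Since $A(N)\leq N$ and $0<x<1$, the boundary term $x^N A(N)\to 0$ as $N\to\infty$, so
\[
\sum_{n=1}^\infty x^n\chi_A(n) = \sum_{n=1}^\infty x^n\bigl(A(n)-A(n-1)\bigr) = (1-x)\sum_{n=1}^\infty A(n)x^n.
\]
Thus the Abel sum equals $(1-x)^2\sum_{n=1}^\infty A(n)x^n$. Given $\varepsilon>0$, pick $N$ so that $(d-\varepsilon)n\leq A(n)\leq (d+\varepsilon)n$ for all $n\geq N$. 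Splitting the sum at $N$, the finite head contributes $O((1-x)^2)\to 0$, while the tail is squeezed between $(d\pm\varepsilon)(1-x)^2\sum_{n\geq N} n x^n$. Using the elementary identity $(1-x)^2\sum_{n=1}^\infty n x^n = x$, the tail tends to $d\pm\varepsilon$. Letting $\varepsilon\to 0^+$ gives $\operatorname{d}_{\text{Abel}}(A)=d$, completing the proof.

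The only real obstacle is the Tauberian implication, and it is entirely absorbed by Theorem~\ref{karamata}; everything else reduces to routine Abel summation and the change of variable $x=e^{-\sigma}$.
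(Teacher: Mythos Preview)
Your proof is correct and follows essentially the same route as the paper: the Tauberian direction (Abel $\Rightarrow$ asymptotic) is handled identically via Karamata's theorem with $\omega=1$, and the Abelian direction (asymptotic $\Rightarrow$ Abel) uses the same summation-by-parts idea to express the Abel sum in terms of $A(n)$ and then squeeze. The only cosmetic difference is that you carry out the Abelian step with discrete Abel summation and the closed form $(1-x)^2\sum_{n\ge 1} n x^n = x$, whereas the paper passes to the Riemann--Stieltjes integral $\int_1^\infty x^t\,dA(t)$, integrates by parts, and uses $\int_1^\infty t x^t\,dt$; your version is marginally more elementary but the underlying argument is the same.
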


The technique used in proving Lemma~\ref{analytic-lemma} can be used to obtain a short alternative proof for Lemma~\ref{Abel-lemma} below. The lemma shows that the existence of the asymptotic density of $A\subset \N$ implies the existence of the Abel density of $A$, thus proving one direction of Theorem~\ref{Abel-thm}.

\begin{lemma}\label{Abel-lemma}
	\cite[Theorem~4.2]{Son}
	If $A\subset\N$, then
	\begin{eqnarray}\label{analytic-Abel}
	0 \leq \underline{\operatorname{d}}(A) \leq \underline{\operatorname{d}}_\textnormal{Abel}(A) \leq \overline{\operatorname{d}}_\textnormal{Abel}(A) \leq \overline{\operatorname{d}}(A) \leq 1.
	\end{eqnarray}
\end{lemma}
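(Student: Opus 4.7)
The plan is to imitate the proof of Lemma~\ref{analytic-lemma} almost step for step, replacing the logarithmic counting function $A_{\log}(t)$ by the ordinary counting function $A(t)=\sum_{1\leq k\leq t}\chi_A(k)$ and replacing the weight $1/t^p$ by the weight $r^t$. As in that lemma, it suffices to prove
\[
\underline{\operatorname{d}}(A)\leq \underline{\operatorname{d}}_\textnormal{Abel}(A)
\quad\text{and}\quad
\overline{\operatorname{d}}_\textnormal{Abel}(A)\leq \overline{\operatorname{d}}(A),
\]
the first being trivial when $\underline{\operatorname{d}}(A)=0$. Fix $\varepsilon\in(0,\underline{\operatorname{d}}(A))$ and choose $t_0$ so that $(\underline{\operatorname{d}}(A)-\varepsilon)t\leq A(t)\leq (\overline{\operatorname{d}}(A)+\varepsilon)t$ for all $t\geq t_0$.

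Next I would write the Abel sum as a Riemann--Stieltjes integral and integrate by parts: for $r\in(0,1)$,
\[
\sum_{n=1}^\infty r^n\chi_A(n)=\int_1^\infty r^t\,dA(t)
=\bigl[r^t A(t)\bigr]_1^\infty+|\!\log r|\int_1^\infty A(t)\,r^t\,dt.
\]
The boundary term is $O(1)$ since $A(t)\leq t$ forces $r^tA(t)\to 0$ as $t\to\infty$. Multiplying through by $(1-r)$ and splitting the tail integral at $t_0$ gives, for every $r$ close to $1$,
\[
(1-r)|\!\log r|\bigl(\underline{\operatorname{d}}(A)-\varepsilon\bigr)\!\int_{t_0}^\infty t\,r^t\,dt+o(1)
\;\leq\;(1-r)\!\sum_{n=1}^\infty r^n\chi_A(n)\;\leq\;
(1-r)|\!\log r|\bigl(\overline{\operatorname{d}}(A)+\varepsilon\bigr)\!\int_{t_0}^\infty t\,r^t\,dt+o(1).
\]

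The central calibration is the identity $\int_0^\infty t\,r^t\,dt=1/(\log r)^2$, together with $(1-r)/|\!\log r|\to 1$ as $r\to 1^-$; this ensures that the outer factor $(1-r)|\!\log r|\int_{t_0}^\infty t\,r^t\,dt$ tends to $1$, and the contribution of the bounded piece $[1,t_0]$ is absorbed into the $o(1)$. Taking $\liminf_{r\to 1^-}$ on the left inequality and $\limsup_{r\to 1^-}$ on the right, then sending $\varepsilon\to 0^+$, yields the two desired bounds and hence \eqref{analytic-Abel}.

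The one place that needs a little care is the tail behaviour: I must verify that $r^tA(t)\to 0$ as $t\to\infty$ so the boundary term in the integration by parts really is $O(1)$, and that the truncation at $t_0$ contributes $o(1)$ after multiplication by $(1-r)|\!\log r|$. Both are easy consequences of the trivial bound $A(t)\leq t$ and of the explicit asymptotic $\int_0^\infty t\,r^t\,dt=(\log r)^{-2}$, but they are the only points where the analogy with the proof of Lemma~\ref{analytic-lemma} is not completely mechanical.
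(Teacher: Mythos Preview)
Your proposal is correct and follows the paper's proof essentially line for line: the same Riemann--Stieltjes/integration-by-parts representation of the Abel sum via the counting function $A(t)$, the same boundary-term estimate from $A(t)\leq t$, and the same calibration $\int t\,r^t\,dt\sim 1/(\log r)^2$ together with $(1-r)\sim|\log r|$. The only cosmetic difference is that the paper evaluates $\int_1^\infty t\,x^t\,dt$ exactly as $\frac{(1-\log x)x}{\log^2 x}$ rather than splitting at $t_0$, but this changes nothing of substance.
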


\begin{proof}
	It suffices to prove that
	$$
	\underline{\operatorname{d}}(A) \leq \underline{\operatorname{d}}_\textnormal{Abel}(A)
	\quad\textnormal{and}\quad	
	\overline{\operatorname{d}}_\textnormal{Abel}(A) \leq \overline{\operatorname{d}}(A),
	$$
where we may again suppose that $\underline{\operatorname{d}}(A)>0$. The proof of the second inequality above does not depend on this restriction.
  
We begin by noting that
	$$
	\int_1^\infty tx^t\, dt=\frac{(1-\log x)x}{\log^2x},\quad 0<x<1.
	$$
This gives, in particular, 
	$$
	\lim_{t\to\infty}tx^t=0,\quad 0<x<1,
	$$
which can also be verified independently by L'Hospital's rule.
	Let $\varepsilon\in (0,\underline{\operatorname{d}}(A))$. Then there exists a $t_0=t_0(\varepsilon)>0$ such that
	$$
	\underline{\operatorname{d}}(A)-\varepsilon\leq \frac{A(t)}{t}\leq \overline{\operatorname{d}}(A)+\varepsilon,\quad t\geq t_0,
	$$
where 
\[
A(x) =
\begin{cases}
	\sum_{1\leq k\leq x}\chi_A(k), & \text{if } x\geq 1, \\
	0, & \text{if } 0\leq x<1.
\end{cases}
\]
It is clear that $A(x)\leq x$ for $x\geq 0$. Using Riemann-Stieltjes integration and integration by parts on the step-function $A(n)$, we obtain
	\begin{equation}\label{RS-Abel}
	\begin{split}
	\sum_{n=1}^\infty x^n\chi_A(n)&=
	\sum_{n=1}^\infty x^n\big(A(n)-A(n-1)\big)\\
	&=\int_1^\infty x^tdA(t)
	=\left[x^tA(t)\right]_1^\infty-\log x\int_1^\infty x^tA(t)\, dt\\
	&=O(1)+\log \frac{1}{x}\int_1^\infty tx^t\cdot\frac{A(t)}{t}\, dt,\quad 0<x<1.
	\end{split}	
	\end{equation}
	Thus
	$$
	\left(\underline{\operatorname{d}}(A)-\varepsilon\right)\frac{(1-\log x)x}{-\log x}+O(1)\leq \sum_{n=1}^\infty x^n\chi_A(n)\leq \left(\overline{\operatorname{d}}(A)+\varepsilon\right)\frac{(1-\log x)x}{-\log x} +O(1).
	$$
	Since
	$$
	\lim_{x\to 1^-}(1-x)\frac{(1-\log x)x}{-\log x}=1,
	$$
	we find that there exists a $\delta\in (0,1)$ such that
	$$
	\underline{\operatorname{d}}(A)-2\varepsilon\leq (1-x)\sum_{n=1}^\infty x^n\chi_A(n)\leq \overline{\operatorname{d}}(A)+2\varepsilon,\quad 1-\delta<x<1.
	$$
	Taking limit inferior as $x\to 1^-$ on the first inequality and limit superior as $x\to 1^-$ on the second inequality gives us
	$$
	\underline{\operatorname{d}}(A)-2\varepsilon \leq \underline{\operatorname{d}}_\textnormal{Abel}(A)
	\quad\textnormal{and}\quad	
	\overline{\operatorname{d}}_\textnormal{Abel}(A) \leq \overline{\operatorname{d}}(A)+2\varepsilon.
	$$
	The assertion follows by letting $\varepsilon\to 0^+$.
\end{proof} 

We give a new proof for the other direction of Theorem~\ref{Abel-thm} based on Karamata's theorem.

\begin{lemma}
	Let $A\subset\N$. If $\delta:=\operatorname{d}_\textnormal{Abel}(A)$ exists, then $\operatorname{d}(A)$ exists and $\operatorname{d}(A)=\delta$.
\end{lemma}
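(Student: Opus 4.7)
The strategy is to translate the hypothesis into the exact form required by Karamata's Tauberian theorem (Theorem~\ref{karamata}) and then read off the conclusion. Set
\[
A(t) := \sum_{n\leq t}\chi_A(n),
\]
so that $A$ is a right-continuous, non-decreasing step function with $A(t)\leq t$, and in particular the Laplace–Stieltjes integral
\[
F(\sigma):=\int_0^\infty e^{-\sigma t}\, dA(t)=\sum_{n=1}^\infty e^{-\sigma n}\chi_A(n)
\]
converges for every $\sigma>0$.

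Next, I would implement the change of variable $x=e^{-\sigma}$, which gives a bijection between $\sigma\in(0,\infty)$ and $x\in(0,1)$, with $\sigma\to 0^+$ corresponding to $x\to 1^-$. Since $1-e^{-\sigma}\sim\sigma$ as $\sigma\to 0^+$, the hypothesis
\[
\lim_{x\to 1^-}(1-x)\sum_{n=1}^\infty x^n\chi_A(n)=\delta
\]
translates directly into
\[
\lim_{\sigma\to 0^+}\sigma\,F(\sigma)=\delta,\qquad\textnormal{i.e.,}\qquad F(\sigma)=\frac{\delta+o(1)}{\sigma},\quad \sigma\to 0^+.
\]
This is precisely the hypothesis of Theorem~\ref{karamata} with parameters $c=\delta$ and $\omega=1$.

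Applying Karamata's theorem, we conclude
\[
A(x)=\frac{(\delta+o(1))x}{\Gamma(2)}=(\delta+o(1))x,\quad x\to\infty,
\]
so $A(x)/x\to\delta$. Restricting to integer values of $x$ yields $\operatorname{d}(A)=\delta$, as desired. There is no real obstacle here beyond correctly verifying that the Laplace transform setup matches Karamata's formulation; the key insight is simply recognising that the Abel density, once rewritten through $x=e^{-\sigma}$, is exactly a statement of asymptotic behaviour of a Laplace–Stieltjes transform at the origin, for which Karamata supplies the Tauberian converse that Lemma~\ref{Abel-lemma} was unable to supply from elementary Riemann–Stieltjes manipulations.
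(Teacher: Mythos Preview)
Your proof is correct and follows essentially the same route as the paper: both arguments set $\sigma=\log(1/x)$ (equivalently $x=e^{-\sigma}$), rewrite the Abel sum as the Laplace--Stieltjes transform $F(\sigma)=\int_0^\infty e^{-\sigma t}\,dA(t)$, use $1-x\sim\sigma$ to convert the Abel-density hypothesis into $F(\sigma)=(\delta+o(1))/\sigma$, and then invoke Karamata's theorem with $\omega=1$.
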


\begin{proof}
	By the assumption, we have
	\begin{equation}\label{Abel-asymptotic}
	(1-x)\sum_{n=1}^\infty x^n\chi_A(n)=\delta+o(1),\quad x\to 1^-.
	\end{equation}
Moreover, from \eqref{RS-Abel},
	$$
	\sum_{n=1}^\infty x^n\chi_A(n)=\int_0^\infty x^tdA(t)=\int_0^\infty e^{-t\log\frac{1}{x}}dA(t).
	$$
	Recall that 
	$$
	1-x<\log\frac1x<\frac{1-x}{x},\quad 0<x<1.
	$$
	Set $\sigma=\log\frac1x$, and define
	$$
	F(\sigma):=\int_0^\infty e^{-\sigma t}dA(t),\quad \sigma>0.
	$$
	Then, by \eqref{Abel-asymptotic}, 
	$$
	F(\sigma)=\frac{\delta+o(1)}{\sigma},\quad \sigma\to 0^+.
	$$
	By Theorem~\ref{karamata}, we get
	$$
	A(n)=(\delta+o(1))n,\quad n\to\infty.
	$$
	This completes the proof.
\end{proof}

Finally, we note that relations between upper and lower asymptotic and Abel densities can be found in \cite{FJS}.

\bigskip
E-mail: \texttt{janne.heittokangas@uef.fi}

\textsc{University of Eastern Finland, Department of Physics and Mathematics, P.O.~Box 111, 80100 Joensuu, Finland}

\medskip
E-mail: \texttt{z.latreuch@nhsm.edu.dz}

\textsc{ National Higher School of Mathematics, Scientific and Technology Hub of Sidi Abdellah, P.O. ~Box 75, Algiers 16093, Algeria}

\end{document}